\author{Pieter Hofstra and Martti Karvonen}
\thanks{}
\address{Department of Mathematics and Statistics, University of Ottawa\\
150 Louis-Pasteur Private, Ottawa, ON, Canada K1N 6N5\\[5pt]
}
\title{Inner automorphisms as $2$-cells}
\keywords{Inner automorphisms, crossed modules, limits and colimits}
\tikzstyle{twocell}=[-implies,double equal sign distance]
\tikzstyle{shorttwocell}=[-implies,double equal sign distance,shorten >=7pt, shorten <=7pt]
\newcommand{\cat}[1]{\ensuremath{\mathbf{#1}}}
\newcommand{\CC}{\cat{C}}
\newcommand{\DD}{\cat{D}}
\newcommand{\JJ}{\cat{J}} 
\newcommand{\Grp}{\cat{Grp}}
\newcommand{\Set}{\cat{Set}}
\newcommand{\Cat}{\cat{Cat}}
\newcommand{\op}{\ensuremath{{}^{\mathrm{op}}}}
\newcommand{\id}[1][]{\ensuremath{\mathrm{id}_{#1}}}
\newcommand{\adjoin}[1]{\mathopen{\langle}#1\mathclose{\rangle}} 
\newcommand{\ie}{\text{i.e.}\xspace}
\newcommand{\eg}{\text{e.g.}\xspace}
\DeclareMathOperator{\cod}{cod}
\DeclareMathOperator{\Aut}{Aut}
\DeclareMathOperator{\Ob}{Ob}
\DeclareMathOperator{\colim}{colim} 
\newcommand{\GG}{\mathcal{G}} 
\newcommand{\HH}{\mathcal{H}} 
\newcommand{\ZZ}{\mathcal{Z}} 
\newcommand{\II}{\mathcal{I}}
\newcommand{\UU}{\mathcal{U}}
\begin{document}

\maketitle	

\begin{abstract}
Abstract inner automorphisms can be used to promote any category into a $2$-category, and we study two-dimensional limits and colimits in the resulting $2$-categories. Existing connected colimits and limits in the starting category become two-dimensional colimits and limits under fairly general conditions. Under the same conditions, colimits in the underlying category can be used to build many notable two-dimensional colimits such as coequifiers and coinserters. In contrast, disconnected colimits or genuinely $2$-categorical limits such as inserters and equifiers and cotensors cannot exist  unless no nontrivial abstract inner automorphisms exist and the resulting $2$-category is locally discrete. We also study briefly when an ordinary functor can be extended to a $2$-functor between the resulting $2$-categories.
\end{abstract}

\section*{On Pieter} 

I, the second named author, worked with Pieter from when I arrived at the University of Ottawa in 2019 until his passing in 2022. We established (special cases of) the the main results of this work early in our collaboration, before we decided to put this work on a back burner and focus on~\cite{hofstrakarvonen:2d-isotropy}. A week before his death, I received Pieter's final comments on our draft of~\cite{hofstrakarvonen:2d-isotropy}, after which I completed that manuscript and then returned to this one with the current memorial volume in mind: while he never saw the final version, I hope that it is to his taste, especially since the natural setting for the theory was given by crossed modules studied in~\cite{funketal:crossedtoposes} instead of the special case we initially worked on. 

I really enjoyed the time I had working with Pieter, and he taught me a lot about categorical thinking. He was great at explaining things, asked insightful questions, and was also good company outside of work. I went on occasional bike rides with him and Phil Scott when most things were closed during the pandemic, seeing a side of them and of Ottawa that I would have otherwise missed. I remember these bike rides very fondly, despite the fact that I was mostly struggling to keep up while these two seasoned riders leisurely chatted away. 

\section{Introduction}

The category of groups can be viewed as a $2$-category in a nontrivial way, where $2$-cells $\psi\to \phi$ correspond to elements $g$ of the codomain such that $g\psi(-)g^{-1}=\phi$. This $2$-categorical structure is not just a curiosity but arises in useful group-theoretic constructions. For instance, the HNN-extension of a group $G$ along two subgroup inclusions $\psi,\phi\colon H\rightrightarrows G$ is often defined in terms of the concrete construction 
  \[G\adjoin{t}/\{t\psi(h)t^{-1}=\phi(h)|h\in H\},\]
but can readily be seen as the universal way of adding a $2$-cell $\psi\to\phi$, in other words, as the coinserter of $\psi$ and $\phi$.

This $2$-categorical structure is often viewed as arising from the fact that groups can be thought of as one-object categories, giving an embedding $\Grp\to\Cat$ of $1$-categories which we can promote to an embedding of $2$-categories by having $\Grp$ inherit its $2$-cells from $\Cat$. In this work, we take a different point of view: the $2$-cells between group homomorphisms come from inner automorphisms, \ie automorphisms induced by conjugating with a fixed element. In turn, these inner automorphisms can be characterized abstractly, resulting in a notion applicable to any category. 

To see how this works, note first that an element $g\in G$ gives us more than just the function $\alpha_{\id}\colon G\to G$ given by conjugation with $g$: For any $f\colon G\to H$, we get a function $\alpha_f\colon H\to H$ given by conjugation with $f(g)$.  Moreover, for any $h\colon H\to H'$ the square  
         \[\begin{tikzpicture}
    \matrix (m) [matrix of math nodes,row sep=2em,column sep=4em,minimum width=2em]
    {
     H & H\\
     H' & H'  \\};
    \path[->]
    (m-1-1) edge node [left] {$h$} (m-2-1)
           edge node [above] {$\alpha_f$} (m-1-2)
    (m-1-2) edge node [right] {$h$} (m-2-2)
    (m-2-1) edge node [below] {$\alpha_{hf}$}  (m-2-2);
  \end{tikzpicture}\] commutes.
Abstractly, this can be captured by saying that  $g$ induces a natural automorphism of the projection $G/\Grp\to\Grp$. A theorem by Bergman~\cite{bergman:inner} shows that any such natural automorphism is induced in this way.

These \emph{extended inner automorphisms} make sense in any category $\CC$, so that one can define the group $\ZZ(A)$ of them at an object $A\in \CC$ as the group of natural automorphisms of the projection $A/\CC\to\CC$. This results in the notion of (covariant) \emph{isotropy group} studied in~\cite{parker:IsotropyofGrothendieckToposes,hofstraetal:picard,funketal:higherisotropy,hofstraetal:isotropyofalgtheories,hofstrakarvonen:2d-isotropy}. The starting point of this work is that such extended inner automorphisms can be used to promote any category $\CC$ into a $2$-category $\CC_\ZZ$ by defining, for $f,g\colon A\rightrightarrows B$, $2$-cells $f\to g$ as ``extended inner automorphisms that take $f$ to $g$'', \ie as elements $\alpha\in \ZZ(B)$ such that $\alpha_{\id}f=g$. This gives another explanation for the $2$-category of groups, as $G\cong \ZZ(G)$ naturally in the group $G$. In fact, one can do this construction more generally: all that one needs is a copresheaf $\GG\colon \CC\to\Grp$ where $\GG(A)$ is thought of as ``abstract inner automorphisms of $A$'', equipped with suitably compatible maps $\gamma_A\colon \GG(A)\to \Aut(A)$ that let elements of $\GG(A)$ act as automorphisms on $A$. This results in the notion of a crossed $\CC$-module, studied in the dual case in~\cite[Section 5]{funketal:crossedtoposes}. 

In this work, we study two-dimensional limits and colimits in $2$-categories that arise in this manner. Our first result, Theorem~\ref{thm:connectedcolims} states that any connected colimit in the underlying category becomes a two-dimensional colimit. Moreover, when the presheaf of sets underlying the presheaf of groups is representable, many further pleasant properties hold as shown in Theorem~\ref{thm:positiveresults}: in particular, all limits in the underlying category are also two-dimensional limits, and if the starting category was finitely cocomplete, one can build arbitrary coinserters by mimicking the construction of HNN-extensions in $\Grp$. In a sense, these positive results are sharp, as evidenced by Theorems~\ref{thm:obstructionsforcolims},\ref{thm:limshavetrivial2cells} and Corollary~\ref{cor:obstructionslims}, which roughly speaking state that one cannot have more in the way of two-dimensional (co)limits unless $\GG$ is trivial. In this way, the two-dimensional (co)limit behavior of $\Grp$ holds in very general conditions. We conclude by discussing briefly when a functor $\CC\to\DD$ extends to a $2$-functor $\CC_\GG\to\DD_\HH$: in particular, we will show that $\GG$ extends canonically to a $2$-functor to groups. This also holds for its left adjoint $L$ whenever it exists, so that $L$ preserves not only ordinary colimits of groups, but also all two-dimensional colimits that exist in $\cat{Grp}$.

\section{Background}

\subsection{Two-dimensional (co)limits}

We refer the reader to~\cite{kelly:2-dlimits} for general background on (strict) $2$-categorical (co)limits and use this section merely to fix notation and terminology. A diagram in a $2$-category $\CC$ is given by a $2$-functor $D\colon\JJ\to \CC$. In general, we require weighted limits, with the weight being given by a $2$-functor $W\colon \JJ\to \Cat$. Given $D$ and $W$, the $W$-weighted limit of $D$ is an object $\lim^W D$ of $\CC$ whose universal property is given by isomorphisms
\[\CC(A,\lim^W D)\cong [\JJ,\Cat](W,\CC(A,D(-)) \] 
that are $2$-natural in $A$. This correspondence is often split into two parts corresponding the objects and morphisms of $\CC(A,\lim^W D)$. The one-dimensional universal property states that maps $A\to \lim^W D$ correspond to  natural transformations $W\to \CC(A,D(-))$. We will stretch the usual one-dimensional terminology here and refer to such a transformation as a cone on $A$, or a  $W$-weighted cone if we wish to be more precise. In particular, the identity on $\lim^W D$ corresponds to a natural transformation $W\to \CC(\lim^W D,D(-))$ that we will call the universal cone. The two-dimensional universal property then states that modifications between transformations  $W\to \CC(A,D(-))$ correspond to $2$-cells between the corresponding maps $A\to\lim^W D$. Strictly speaking, establishing that an object satisfies both the one- and two-dimensional universal properties is not quite enough to exhibit it as a weighted limit, as the required isomorphism should be one of categories and not just a bijection between their sets of objects and hom-sets. However, in this paper we will only focus on the one-dimensional and two-dimensional universal properties and omit checks of functoriality as routine.

In the sequel, we will often study whether an ordinary (\ie $1$-categorical) (co)limit in the underlying $1$-category of $\CC$ is also a $2$-categorical (co)limit in $\CC$: in such cases the weight $W$ is assumed to be constant at the terminal category $\cat{1}$ and suppressed from the notation.

The situation and notation for weighted colimits is dual, with the weight $W\colon \JJ\op\to \Cat$  now being contravariant, and the colimit (if it exists) denoted by $\colim^W D$. Again, we will speak of ($W$-weighted) cocones and of the universal cocone. 

\subsection{Crossed modules and isotropy}

In this section we recall some background on crossed modules and isotropy. We essentially follow the development in~\cite[Section 5]{funketal:crossedtoposes}, except that we will work with the dual notion and call these simply crossed modules instead crossed $\CC$-modules.

\begin{definition}[Definition 5.2 of \cite{funketal:crossedtoposes}] Let $\CC$ be a category. A \emph{crossed module} consists of 
  \begin{itemize}
    \item a copresheaf $\GG\colon\CC\to\Grp$ 
    \item group homomorphisms $\gamma_A\colon \GG(A)\to \Aut(A,A)$ for each object $A$ of $\CC$
  \end{itemize}
  such that 
  \begin{enumerate}
    \item for each $f\colon A\to B$ in $\CC$ and $\alpha\in \GG(A)$ we have $f\gamma_A(\alpha)=\gamma_B(\GG(f)\alpha)f$, \ie the square 
      \[\begin{tikzpicture}
    \matrix (m) [matrix of math nodes,row sep=2em,column sep=4em,minimum width=2em]
    {
     A & A  \\
     B & B  \\};
    \path[->]
    (m-1-1) edge node [left] {$f$} (m-2-1)
           edge node [above] {$\gamma_A(\alpha)$} (m-1-2)
    (m-1-2) edge node [right] {$f$} (m-2-2)
    (m-2-1) edge node [below] {$\gamma_B(\GG(f)\alpha)$}  (m-2-2);
  \end{tikzpicture}\]
  commutes.
    \item for each $\alpha,\beta\in \GG(A)$ we have $\GG(\gamma(\alpha))\beta=\alpha \beta \alpha ^{-1}$ (Peiffer identity).
  \end{enumerate}
\end{definition}

Crossed modules organize themselves into a $2$-category.

\begin{definition}[Definition 5.6 of \cite{funketal:crossedtoposes}]\label{def:crossedmod} A morphism of crossed modules 
    \[(\GG \colon \CC\to\Grp,\gamma)\to (\HH\colon \DD\to\Grp,\delta)\] 
consists of a functor $F\colon \CC\to\DD$ and of a natural transformation $\sigma\colon \GG\to \HH\circ F$ such that the square 
  \[\begin{tikzpicture}
    \matrix (m) [matrix of math nodes,row sep=2em,column sep=4em,minimum width=2em]
    {
     \GG(A) & \HH F(A) \\
      \Aut(A) & \Aut(FA)  \\};
    \path[->]
    (m-1-1) edge node [left] {$\gamma_A$} (m-2-1)
           edge node [above] {$\sigma$} (m-1-2)
    (m-1-2) edge node [right] {$\delta_{FA}$} (m-2-2)
    (m-2-1) edge node [below] {$F$}  (m-2-2);
  \end{tikzpicture}\]
commutes for each $A\in\CC$. For two parallel morphisms $(F,\sigma)$ and $(G,\theta)$ of crossed modules, a $2$-cell $(F,\sigma)\to (G,\theta)$ consists of a natural transformation $\tau\colon F\to G$ such that 
\[\theta=(\HH \tau)\circ \sigma\]
\end{definition}

We move to our main example of a crossed module.

\begin{definition}
 Let $\CC$ be a category and $X$ an object of $\CC$. 
Then the (covariant) \emph{isotropy group of $\CC$ at $X$} is the group $\ZZ(X)$ of natural automorphisms of the projection functor $P_X\colon X/\CC\to \CC$. 
\end{definition}

Explicitly, an automorphism 
\begin{equation}\label{eq:alpha} 
\alpha=(\alpha_f)_{f:X \to A} \in \ZZ(X)=_{\mathrm{def}} \Aut(P_X\colon X/\CC\to \CC) \end{equation}
 consists of an automorphism $\alpha_f\colon A\to A$ for each $f\colon X\to A$ such that the square
    \[\begin{tikzpicture}
    \matrix (m) [matrix of math nodes,row sep=2em,column sep=4em,minimum width=2em]
    {
     X & X\\
     A & A  \\};
    \path[->]
    (m-1-1) edge node [left] {$f$} (m-2-1)
           edge node [above] {$\alpha_{1_A}$} (m-1-2)
    (m-1-2) edge node [right] {$f$} (m-2-2)
    (m-2-1) edge node [below] {$\alpha_{f}$}  (m-2-2);
  \end{tikzpicture}\]
commutes. (In the terminology of universal algebra, $\alpha_f$\/ \emph{extends} $\alpha_{1_A}$.) 
Moreover, the naturality of $\alpha_f$\/ then amounts to requiring that for each $g:A \to B$, the square
    \[\begin{tikzpicture}
    \matrix (m) [matrix of math nodes,row sep=2em,column sep=4em,minimum width=2em]
    {
     A & A\\
     B & B  \\};
    \path[->]
    (m-1-1) edge node [left] {$g$} (m-2-1)
           edge node [above] {$\alpha_f$} (m-1-2)
    (m-1-2) edge node [right] {$g$} (m-2-2)
    (m-2-1) edge node [below] {$\alpha_{gf}$}  (m-2-2);
  \end{tikzpicture}\]
commutes for any $g\colon A\to B$. 

A morphism $x\colon X\to Y$ induces a homomorphism $\ZZ(X)\to \ZZ(Y)$ as follows: 
first, note that that $x$ induces a functor $x^*\colon Y/\CC\to X/\CC$ fitting into a strictly commuting triangle
    \[\begin{tikzpicture}
    \node (x) at (0,1) {$Y/\CC$};
    \node (a) at (-1,-1) {$X/\CC$};
    \node (b) at (1,-1) {$\CC$};
    \draw[->] (x) to node[left] {$x^*$} (a);
    \draw[->] (x) to node[right] {$P_Y$} (b);
    \draw[->] (a) to node[below] {$P_X$} (b);
    \end{tikzpicture}\]
so that given $\alpha\in\ZZ(X)$ we can define $\ZZ(x)\alpha$ by whiskering along $x^*$, \ie $\ZZ(x)\alpha:=\alpha x^*$. 
In concrete terms, $\ZZ(x)\alpha$ is defined for $f\colon Y\to A$ by $(\ZZ(x)\alpha)=\alpha_{fx}$.  Consequently, $\ZZ$ is functorial in $X$. Moreover, there is a canonical comparison map  $\delta\colon\ZZ(X)\to \Aut(X)$ defined by $\alpha\mapsto\alpha_{\id}$.

\begin{example}  We list some known characterizations of isotropy groups of various categories. Proofs of these results can be found in~\cite{bergman:textbook,parker:thesis,hofstrakarvonen:2d-isotropy,hofstraetal:isotropyofalgtheories}
\begin{itemize}
\item The result mentioned in the introduction states that for a group $G$, the isotropy group $\ZZ(G)$ is naturally isomorphic to $G$.
\item For a monoid, $M$, the isotropy group $\ZZ(M)$ is naturally isomorphic to the group of units of $M$. 
\item For the category of Abelian groups, the isotropy group functor $\ZZ\colon \cat{Ab}\to \Grp$ is isomorphic to the constant functor at $\mathbb{Z}_2$, corresponding to the automorphisms $x\mapsto x$ and $x\mapsto -x$. 
\item For $1$-categories of  categories and groupoids the isotropy group is trivial. In contrast, for the $1$-category of strict monoidal categories and strict monoidal functors, the isotropy group of a monoidal category coincides with the group of units of the underlying monoid, \ie with the group of strictly invertible objects.\footnote{As an aside, one might have hoped that this would change if thinks of categories and groupoids as $2$-categories, and studies ``inner autoequivalences'' by moving to a two-dimensional version of isotropy. However, two-dimensional isotropy remains trivial for categories and groupoids in this setting, whereas for monoidal categories one recovers the $2$-group of weakly invertible objects~\cite{hofstrakarvonen:2d-isotropy}}.
\end{itemize}
\end{example}

\begin{definition} A category $\CC$ has \emph{small isotropy} if for each object $X$ of $\CC$, the class of natural automorphisms of the projection $X/\CC\to\CC$ is in fact a set, so that $\ZZ$ defines a functor $\CC\to\Grp$.
\end{definition}

As shown in~\cite[Theorem 2.4]{hofstrakarvonen:2d-isotropy}, a convenient sufficient condition for a category $\CC$ to have small isotropy is given by $\CC$ having binary coproducts and a small dense subcategory. In particular, any locally presentable category has small isotropy. Whenever $\CC$ has small isotropy, the maps  $\delta_A\colon\ZZ(A)\to \Aut(A)$ defined by $\alpha\mapsto \alpha_{\id}$ promote $\ZZ$ into a crossed $\CC$-module.

For an arbitrary crossed module $(\GG \colon \CC\to\Grp,\gamma)$, the maps $\gamma$ induce maps $\GG(A)\to\ZZ(A)$, given $\alpha\in \GG(A)$, the corresponding element of $\ZZ(A)$ is defined at $f\colon A\to B$ by $\gamma_B(\GG(f) \alpha)$. This endows $\ZZ$ with a universal property.

\begin{proposition}\label{prop:isotropyisterminal}[Essentially proposition 4.11 of \cite{funketal:crossedtoposes}] If $\CC$ has small isotropy, then $\ZZ$ is terminal in the fiber over $\CC$ of the forgetful functor sending a crossed modules to its underlying category. \end{proposition}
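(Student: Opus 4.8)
The plan is to unwind the claim and exhibit, for any crossed module $(\GG\colon\CC\to\Grp,\gamma)$ over the same base $\CC$, a unique morphism of crossed modules $(\GG,\gamma)\to(\ZZ,\delta)$ whose underlying functor is the identity on $\CC$. By Definition~\ref{def:crossedmod}, such a morphism with underlying functor $\id[\CC]$ amounts to a natural transformation $\sigma\colon\GG\to\ZZ$ such that $\delta_A\circ\sigma_A=\gamma_A$ for every object $A$ of $\CC$. So the task splits into three parts: (i) write down a candidate $\sigma$; (ii) check it is the only possible choice; (iii) verify it is well-defined, natural, and compatible with the comparison maps into $\Aut$.

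First I would take the candidate $\sigma_A$ to be exactly the map $\GG(A)\to\ZZ(A)$ described in the paragraph preceding the proposition: given $\alpha\in\GG(A)$, the element $\sigma_A(\alpha)\in\ZZ(A)$ is the natural automorphism of $P_A\colon A/\CC\to\CC$ whose component at $f\colon A\to B$ is $\gamma_B(\GG(f)\alpha)\colon B\to B$. For uniqueness, observe that any morphism $\sigma$ of crossed modules over $\id[\CC]$ must satisfy, for $\alpha\in\GG(A)$ and $f\colon A\to B$, the identity $(\sigma_A\alpha)_f=(\ZZ(f)\sigma_A\alpha)_{\id}=(\sigma_B\GG(f)\alpha)_{\id}=\delta_B(\sigma_B\GG(f)\alpha)=\gamma_B(\GG(f)\alpha)$, where the first equality is the definition of $\ZZ(f)$ via whiskering, the second is naturality of $\sigma$, the third is the definition of $\delta$, and the last is the compatibility condition $\delta\circ\sigma=\gamma$. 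Thus $\sigma$ is forced, which simultaneously proves uniqueness and tells us precisely what must be checked for existence.

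It then remains to verify that this forced $\sigma_A$ genuinely lands in $\ZZ(A)$, is a group homomorphism, is natural in $A$, and satisfies $\delta_A\sigma_A=\gamma_A$. The last of these is immediate by evaluating at $f=\id[A]$ and using $\GG(\id[A])=\id$. That each $\sigma_A(\alpha)$ is a well-defined element of $\ZZ(A)$ requires checking the two commuting squares in the definition of $\ZZ(A)$: the ``extension'' square uses axiom~(1) of a crossed module (with the morphism $f\colon A\to B$), and the naturality square for $g\colon B\to B'$ again uses axiom~(1) together with functoriality of $\GG$. That $\sigma_A$ is a homomorphism follows since $\GG(f)$ and $\gamma_B$ are homomorphisms and multiplication in $\ZZ(B)$ is computed componentwise. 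Naturality of $\sigma$ in $A$ — i.e.\ $\ZZ(x)\circ\sigma_A=\sigma_Y\circ\GG(x)$ for $x\colon A\to Y$ — is a direct computation: evaluating both sides at $f\colon Y\to B$ gives $\gamma_B(\GG(fx)\alpha)$ on one side and $\gamma_B(\GG(f)\GG(x)\alpha)$ on the other, equal by functoriality of $\GG$.

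The main obstacle, such as it is, is purely bookkeeping: the argument is entirely formal once one notices that the compatibility and naturality constraints pin $\sigma$ down uniquely, so the only real content is confirming that the crossed-module axioms are exactly what is needed to make the forced formula well-defined. The Peiffer identity (axiom~(2)) plays no role in this particular statement; only axiom~(1) and functoriality of $\GG$ are used. I would present the uniqueness computation in full (as above, it is short and illuminating) and relegate the remaining verifications — the two squares witnessing $\sigma_A(\alpha)\in\ZZ(A)$, homomorphy, and naturality — to brief remarks, since each is a one-line diagram chase from axiom~(1).
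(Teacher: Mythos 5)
Your proposal is correct and follows exactly the route the paper intends: the candidate $\sigma_A(\alpha)_f=\gamma_B(\GG(f)\alpha)$ is precisely the map described in the paragraph preceding the proposition, and your verifications (membership in $\ZZ(A)$ via axiom~(1), homomorphy, naturality, and $\delta\circ\sigma=\gamma$) are the routine checks the paper defers to its reference. The derivation showing that the compatibility square and naturality force this formula — hence uniqueness — is a nice, correct way to organize the argument and correctly identifies that the Peiffer identity is not needed here.
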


We know define our main object of study --- the $2$-category induced by a crossed module.

\begin{definition} The (strict) $2$-category $\CC_\GG$ induced by a crossed $\CC$-module $(\GG \colon \CC\to\Grp,\gamma)$ has $\CC$ as its underlying $1$-category. Given two parallel morphisms $f,g\colon A\rightrightarrows B$ in $\CC$, a $2$-cell $f\to g$ in $\CC_\GG$ is given by an element $\alpha\in \GG(B)$ such that $\gamma_B(\alpha)\circ f=g$. Vertical composition of $2$-cells is given by multiplication in $\GG$. Given $A\xrightarrow{f_1}B\xrightarrow{f_2}C$, $\alpha\colon f_1\to g_1$ and $\beta\colon f_2\to g_2$, the horizontal composite $\beta*\alpha\colon f_2f_1\to g_2 g_1$ is defined as $\beta \cdot \GG(f_2)(\alpha)(= (\GG(g_2)\alpha)\cdot\beta)$.
\end{definition} 

\begin{example} 
\begin{itemize}
\item The $2$-category $\Grp_\ZZ$ gives the aforementioned $2$-category of groups where $2$-cells $\psi\to \phi$ correspond to elements $g$ of the codomain such that $g\psi(-)g^{-1}=\phi$.
\item For monoids, the $2$-category $\cat{Mon}_\ZZ$ gives a similar $2$-category, where $2$-cells $\psi\to \phi$ correspond to invertible elements $g$ of the codomain such that $g\psi(-)g^{-1}=\phi$.
\item For the category of Abelian groups, the nontrivial $2$-cells in $\cat{Ab}_\ZZ$ are of the form $(-1)\colon f\to -f$ for any homomorphism $f$. 
\item If the isotropy group of a category $\CC$ is trivial, then $\CC_\ZZ$ is just $\CC$ viewed as a locally discrete $2$-category. Given Proposition~\ref{prop:isotropyisterminal}, in this case for any crossed module $(\GG \colon \CC\to\Grp,\gamma)$ on $\CC$, all $2$-cells in $\CC_\GG$ are automorphisms.
\end{itemize}
\end{example}

One could carry out straightforward but tedious calculations to show that this indeed results in a strict $2$-category. Instead, we rely on~\cite[Section 5]{funketal:crossedtoposes} where this is deduced more abstractly: in a nutshell, just like an ordinary crossed module of groups induces (and indeed is equivalent to) a group object in $\cat{Cat}$, a crossed module in the sense of Definition~\ref{def:crossedmod} can be shown to induce a category object in $\cat{Cat}$, \ie  a double category. This double category turns out to be one coming from a $2$-category with further special properties.

\begin{definition}[Dual of Definition 5.17 of~\cite{funketal:crossedtoposes}] A $2$-category is \emph{right-generated by contractible loops} if every $\alpha\colon f\to g$ can be written uniquely as $\alpha=\beta f$, where $\beta$ has the identity as its domain, \ie if every $\alpha$ satisfies 
    \[\begin{tikzpicture}[baseline=-.15cm]
    \node (a) at (-1,0) {$A$};
    \node (b) at (1,0) {$B$};
    \draw[->] (a) to[out=35,in=145] coordinate[midway] (h1) node[above]{$f$} (b);
    \draw[->] (a) to[out=-35,in=215] coordinate[midway] (h2) node[below] {$g$} (b);
    \draw[shorttwocell] (h1) to node[right] {$\alpha$} (h2);
    \end{tikzpicture}
    \quad=\quad
    \begin{tikzpicture}[baseline=-.15cm]
    \node (c) at (-3,0) {$A$};
    \node (a) at (-1,0) {$B$};
    \node (b) at (1,0) {$B$};
    \draw[->] (c) to node[above] {$f$}  (a);
    \draw[->] (a) to[out=35,in=145] coordinate[midway] (h1) node[above]{$\id$} (b);
    \draw[->] (a) to[out=-35,in=215] coordinate[midway] (h2) node[below] {$\cod \beta$} (b);
    \draw[shorttwocell] (h1) to node[right] {$\beta$} (h2);
    \end{tikzpicture}
    \]
  for a unique $\beta$.
\end{definition}

\begin{theorem}\label{thm:xmodto2cat}[Theorem 5.18 of \cite{funketal:crossedtoposes}] The assignment $(\GG \colon \CC\to\Grp,\gamma)\mapsto \CC_\GG$ extends to a (strict) $2$-functor from crossed modules to $2$-categories that is full and faithful on both 1 and $2$-cells. Its essential image is given by $2$-categories that are locally groupoidal and right-generated by contractible loops.
\end{theorem}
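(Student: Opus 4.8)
Here is how I would approach the final statement (Theorem~\ref{thm:xmodto2cat}).

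\emph{Construction of the $2$-functor.} The plan is to pin the $2$-functor down by requiring it to be the identity on underlying $1$-categories, to exploit the fact that the contractible loops $\alpha\colon\id[A]\to\gamma_A(\alpha)$ literally \emph{are} the elements of $\GG(A)$, and then to recover inverse data from an arbitrary $2$-functor between the induced $2$-categories. On objects the assignment is the given one. A morphism $(F,\sigma)\colon(\GG,\gamma)\to(\HH,\delta)$ is sent to the $2$-functor acting as $F$ on objects and $1$-cells and sending a $2$-cell $\alpha\colon f\to g$ of $\CC_\GG$ --- \ie an $\alpha\in\GG(B)$ with $\gamma_B(\alpha)f=g$ --- to $\sigma_B(\alpha)\in\HH(FB)$; this is a $2$-cell $Ff\to Fg$ since the square of Definition~\ref{def:crossedmod} gives $\delta_{FB}(\sigma_B(\alpha))=F(\gamma_B(\alpha))$, whence $\delta_{FB}(\sigma_B(\alpha))\circ Ff=F(g)$. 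Preservation of identity and vertical composites is immediate because each $\sigma_B$ is a homomorphism, and preservation of horizontal composites follows from naturality of $\sigma$ together with the formula $\beta*\alpha=\beta\cdot\GG(f_2)(\alpha)$. A $2$-cell $\tau\colon(F,\sigma)\to(G,\theta)$ is sent to the strict $2$-natural transformation with component $\tau_A\colon FA\to GA$ at $A$; its one-dimensional naturality is exactly the naturality of $\tau$ as a transformation $F\to G$, and its two-dimensional naturality, evaluated at a loop $\alpha\colon\id[A]\to\gamma_A(\alpha)$, reduces to $\theta_A=(\HH\tau_A)\circ\sigma_A$, which is the defining condition on $\tau$. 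Strictness and functoriality of the whole assignment are then routine checks with these formulas.

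\emph{Full faithfulness on $1$- and $2$-cells.} Given a $2$-functor $H\colon\CC_\GG\to\DD_\HH$, let $F$ be its action on underlying $1$-categories. Applying $H$ to the loop $\alpha\colon\id[B]\to\gamma_B(\alpha)$ yields a $2$-cell $\id[FB]\to F(\gamma_B(\alpha))$ of $\DD_\HH$, \ie an element $\sigma_B(\alpha)\in\HH(FB)$ with $\delta_{FB}(\sigma_B(\alpha))=F(\gamma_B(\alpha))$. Functoriality of $H$ on vertical composites forces $\sigma_B$ to be a homomorphism, and functoriality on left and right whiskering forces $\sigma$ to be natural, so $(F,\sigma)$ is a morphism of crossed modules. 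Since $\CC_\GG$ is right-generated by contractible loops, $H$ is determined on all $2$-cells by its values on the loops, so $H$ coincides with the $2$-functor induced by $(F,\sigma)$; running the same computation backwards shows two morphisms of crossed modules inducing the same $2$-functor are equal. For $2$-cells, a strict $2$-natural transformation $\mu$ between the $2$-functors induced by $(F,\sigma)$ and $(G,\theta)$ has components $\mu_A\colon FA\to GA$, and its one-dimensional naturality makes $\mu$ a natural transformation $F\to G$; evaluating its two-dimensional naturality at the loops $\alpha\colon\id[A]\to\gamma_A(\alpha)$ gives $\theta_A=(\HH\mu_A)\circ\sigma_A$, so $\mu$ comes from a (visibly unique) $2$-cell of crossed modules.

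\emph{The essential image.} One inclusion is immediate: in $\CC_\GG$ every $2$-cell $\alpha\colon f\to g$ has inverse $\alpha^{-1}\in\GG(B)$, so $\CC_\GG$ is locally groupoidal, and $\alpha$ equals $\beta f$ for $\beta$ the loop $\id[B]\to\gamma_B(\alpha)$, uniquely so since $\beta$ and $\alpha$ have the same underlying element of $\GG(B)$; these two properties are isomorphism-invariant. Conversely, let $\EE$ be locally groupoidal and right-generated by contractible loops. Take $\CC$ to be the underlying $1$-category of $\EE$ and, for each object $B$, let $\GG(B)$ be the collection of $2$-cells of $\EE$ with domain $\id[B]$, with multiplication given by horizontal composition; using right-generation and local groupoidality one checks first that any $h$ arising as the codomain of such a loop is invertible in $\CC$, and then that $\GG(B)$ is a group with unit the identity $2$-cell on $\id[B]$. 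A morphism $f\colon A\to B$ acts on a loop $\beta$ at $A$ by the unique loop $\GG(f)(\beta)$ at $B$ with $\GG(f)(\beta)\,f=\id[f]*\beta$; this is functorial in $f$, and with $\gamma_B$ the codomain map it satisfies the crossed-module axioms --- axiom~(1) is read off from the codomain of that equation and the Peiffer identity~(2) is an instance of the interchange law. Finally, right-generation gives for each $f,g\colon A\to B$ a bijection between $2$-cells $f\to g$ in $\EE$ and loops $\alpha$ at $B$ with $(\cod\alpha)f=g$, \ie the $2$-cells $f\to g$ of $\CC_\GG$, and a short interchange-law computation shows it respects all compositions, so $\CC_\GG\cong\EE$.

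\emph{Main obstacle.} The only genuinely substantive work is in the essential-image direction: showing that $\GG$ is a well-defined functor valued in groups and that the Peiffer identity holds is a careful repackaging of the interchange law and the uniqueness clause of right-generation, and one must either permit large groups or restrict to suitably small $\EE$, since a priori $\GG(B)$ is only a large collection. The full-faithfulness half, by contrast, is bookkeeping with the explicit horizontal-composition formula.
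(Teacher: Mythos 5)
Your argument is correct, but be aware that the paper itself offers no proof of this statement: it is imported verbatim (in dual form) from Theorem 5.18 of the cited work of Funk, Hofstra and Steinberg, and the paragraph preceding it explicitly declines to do the ``straightforward but tedious calculations'', pointing instead to the abstract route taken there --- a crossed module is shown to correspond to a category object in $\Cat$, \ie a double category, generalizing the classical equivalence between crossed modules of groups and group objects in $\Cat$, and the relevant $2$-categories are then extracted from these double categories. Your proof is precisely the elementary alternative the paper alludes to, carried out by hand: you identify the elements of $\GG(B)$ with the contractible loops at $B$, use the whiskering formula $\beta*\id[f]=\beta$ to see that a $2$-functor out of $\CC_\GG$ is determined by its values on loops, and in the essential-image direction reconstruct the crossed module by taking $\GG(B)$ to be the loops at $\id[B]$ under horizontal composition, with $\gamma_B$ the codomain map. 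This is self-contained and avoids the detour through internal categories, at the cost of the interchange-law bookkeeping you describe; both routes are legitimate. Two small points worth tightening: the homomorphism property of $\sigma_B$ does not follow from preservation of vertical composites alone, since the vertical composite of the loop $\alpha$ with the $2$-cell $\alpha'\colon\gamma_B(\alpha)\to\gamma_B(\alpha'\alpha)$ involves a $2$-cell that is only identified with the image of the loop $\alpha'$ after right-whiskering by $\gamma_B(\alpha)$, so preservation of whiskering is needed at that step as well; and your size caveat is genuine, since the paper's crossed modules are $\Grp$-valued copresheaves, so the essential image must be read as consisting of those $2$-categories whose collections of loops are small.
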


\begin{remark} Another viewpoint on inner automorphisms of a group is that an automorphism $f\colon G\to G$ is inner iff it is isomorphic to $\id[G]$ in the $2$-category of groups. This works in fact more generally: if one thinks of a crossed $\CC$-module structure $(\GG,\gamma)$ as giving an abstract group of structured inner automorphisms, then 
 one could define an automorphism $f\colon A\to A$ in $\CC$ to be inner iff it is isomorphic to $\id[A]$ in $\CC_\GG$.
\end{remark}

\begin{remark} As any category $\CC$ with small isotropy has a canonical crossed module given by the isotropy group, one could envision first promoting $\CC$ into a $2$-category $\CC_\ZZ$ and then applying this construction to each hom-category ad infinitum. However, this construction is not worth iterating: for $n>2$ one can show that in the resulting $(n,1)$-category there is always exactly one $n$-cell between two parallel $n-1$-cells.
\end{remark} 

\section{Two-dimensional (co)limits in $2$-categories induced by crossed modules}

In this section we study the existence of two-dimensional (co)limits in $2$-categories induced by crossed modules. We begin with the only positive result we know to hold without further assumptions. 

\begin{theorem}\label{thm:connectedcolims}  Let $(\GG \colon \CC\to\Grp,\gamma)$ be a crossed module. Then all connected colimits that exist in $\CC$ are (strict) $2$-colimits in $\CC_\GG$. 
\end{theorem}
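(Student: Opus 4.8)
The plan is to verify both the one-dimensional and two-dimensional universal properties of a colimit cocone in $\CC$ when regarded as a cocone in $\CC_\GG$, exploiting the fact that $\CC_\GG$ has the same underlying $1$-category as $\CC$ and that connectedness of the shape category $\JJ$ forces coherence on families of $2$-cells. So let $D\colon \JJ\to\CC_\GG$ be a $2$-functor with $\JJ$ connected, and suppose $\colim D$ exists in $\CC$ with universal cocone $(\iota_j\colon Dj\to \colim D)_{j\in\JJ}$. Since $D$ is in particular an ordinary functor into $\CC$ and $(\iota_j)$ is an ordinary colimit cocone, the one-dimensional universal property in $\CC_\GG$ — that ordinary cocones on an object $A$ correspond to maps $\colim D\to A$ — is immediate, modulo checking that a $2$-natural family $(\iota_j)$ really is a strict cocone in the $2$-categorical sense; but $D$ sends $2$-cells to $2$-cells and the $\iota_j$ are strictly compatible, so this is routine.

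The substance is the two-dimensional property: I must show that modifications of cocones correspond bijectively to $2$-cells between the induced maps out of $\colim D$. Unwinding, a $2$-cell between cocones $(f_j)$ and $(g_j)$ (into a fixed $A$) is a family $\alpha_j\in\GG(A)$ with $\gamma_A(\alpha_j)f_j = g_j$, subject to the modification axiom that for each arrow $u\colon j\to k$ in $\JJ$ the $\alpha_j$ are compatible with $Du$. The key claim is that for $\JJ$ connected this family is \emph{constant}: the modification condition, combined with the fact that the $2$-cells $Du$ in $\CC_\GG$ are themselves given by elements of $\GG$ acting by conjugation and that every $2$-cell is invertible (local groupoidality, Theorem~\ref{thm:xmodto2cat}), forces $\alpha_j = \alpha_k$ whenever there is an arrow $j\to k$; connectedness then propagates this to a single element $\alpha\in\GG(A)$. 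Given such an $\alpha$, it transports along the colimit to a $2$-cell between the two induced maps $\colim D\rightrightarrows A$, and conversely any $2$-cell between those maps restricts to a constant modification; one checks these assignments are mutually inverse, and $2$-natural in $A$, which I would again treat as routine.

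The main obstacle I anticipate is pinning down exactly how the modification axiom interacts with the description of the $2$-cells $Du$ as elements of $\GG(Dk)$ — one has to push the defining equation $\gamma_{Dk}(Du)\circ D(u)_{\mathrm{dom}} = D(u)_{\mathrm{cod}}$ through the cocone maps and use the crossed-module naturality square (axiom (1) of the definition of a crossed module) to move the $\GG$-elements past the structure maps. Concretely, for $u\colon j\to k$ one needs that the images of $\alpha_j$ and $\alpha_k$ in $\GG(A)$ agree after accounting for the conjugation by the $2$-cell $Du$, and since $Du$ is itself trivialised once we pass to $A$ (because $f_j = f_k\circ (Du)_{\text{underlying}}$ as maps into $A$ and similarly for $g$), the conjugating factors cancel, leaving $\alpha_j=\alpha_k$. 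I expect this to be the only place where genuine care is needed; everything else — functoriality of the correspondence, strictness, $2$-naturality in $A$ — follows the standard template for recognising $2$-colimits and can be dispatched quickly. It is also worth noting where connectedness is essential: with $\JJ$ empty or disconnected the family $(\alpha_j)$ need not be constant, which is precisely the phenomenon exploited later in Theorem~\ref{thm:obstructionsforcolims}.
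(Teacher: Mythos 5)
Your proposal is correct and follows essentially the same route as the paper: the entire content is that the modification axiom together with connectedness of $\JJ$ collapses the family $(\alpha_j)$ to a single element $\alpha\in\GG(A)$, after which everything else is formal. Two remarks. First, the ``main obstacle'' you anticipate is not actually there: these are conical colimits of $1$-categorical diagrams (the weight is constant at $\cat{1}$ and $\JJ$ is a mere category), so there are no $2$-cells $Du$ to conjugate by. The modification axiom for $u\colon j\to k$ only involves the right whiskering of $\alpha_k$ by the $1$-cell $D(u)$, and by the definition of horizontal composition in $\CC_\GG$ this whiskering leaves the underlying element of $\GG(A)$ unchanged; since parallel $2$-cells are equal iff their group elements are, $\alpha_k=\alpha_j$ falls out immediately, with no appeal to the crossed-module naturality square or to local groupoidality. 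Second, the step where $\alpha$ ``transports along the colimit'' does require a verification you omit: one must check that $\gamma_A(\alpha)\circ b=b'$ for the induced maps $b,b'\colon\colim D\rightrightarrows A$, so that $\alpha$ really is a $2$-cell $b\to b'$ in $\CC_\GG$ and not merely an element of $\GG(A)$. This is a one-liner using that the colimit cocone $(\iota_j)$ is jointly epic, since $\gamma_A(\alpha)\circ b\circ\iota_j=\gamma_A(\alpha)\circ f_j=g_j=b'\circ\iota_j$ for all $j$, but it is the only place where the universal property of the colimit in $\CC$ is actually used for the two-dimensional part, so it should be stated.
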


\begin{proof}
 Let $\JJ$ be a connected category and consider the ($1$-categorical) colimit $(C,(c_j)_{j\in\JJ})$ of  $D\colon \JJ\to \CC$. To show that it has the required two-dimensional universal property, consider two cocones $(b_j)_{j\in\JJ}$ and $(b_j')_{j\in\JJ}$ on an object $B$ and a modification $(\alpha_j\colon b_j\to b_j')_{j\in \JJ}$ between them. The two-dimensional universal property states that any such modification is induced by a unique $2$-cell $\alpha\colon b\to b'$, where $b,b'\colon C\rightrightarrows B$ are the canonical maps factorizing the cocones  $(b_j)_{j\in\JJ}$ and $(b_j')_{j\in\JJ}$ respectively. To show that this holds, note first that for any $f\colon j\to k$ in $\JJ$, we have an equality of $2$-cells $\alpha_{k} b_{k}D(f)=\alpha_j$ as $(\alpha_j)_{j\in \JJ}$ is a modification.
 As two parallel $2$-cells are equal iff the corresponding elements of $\GG$ are, this means that $\alpha_{k}=\alpha_{j}$ whenever there is a map between $j$ and $k$ in $\JJ$. As $\JJ$ is connected, this implies that $\alpha_{j}=\alpha_{k}$ for all $j,k\in\JJ$, so let us write $\alpha$ for the element of $\GG(B)$ that is uniquely determined by this modification. As the cocone on $C$ is jointly epic, the equations $\gamma(\alpha) b_j=b'_j$ imply that $\gamma(\alpha) b=b'$, so that $\alpha$ determines a $2$-cell $b\to b'$. Thus the modification $(\alpha_j\colon b_j\to b_j')$ is induced by a unique $2$-cell $\alpha\colon b\to b'$ as desired.
\end{proof}

The following simple fact will be used repeatedly.
\begin{lemma}\label{lem:initial} Let  $(\GG \colon \CC\to\Grp,\gamma)$ be a crossed module and let $\II\colon \Grp\to\Cat$ be the inclusion. If $\CC$ has an initial object $0$, then there is a $2$-natural isomorphism \[\CC_\GG(0,-)\cong \II\circ\GG .\]
\end{lemma}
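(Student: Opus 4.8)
The plan is to produce the $2$-natural isomorphism by exhibiting, for each object $A$ of $\CC$, an isomorphism of categories $\CC_\GG(0,A)\cong \II(\GG(A))$ and then checking naturality in $A$ and $2$-naturality (i.e.\ functoriality on $2$-cells). For the object level: since $0$ is initial, there is exactly one morphism $!_A\colon 0\to A$, so the category $\CC_\GG(0,A)$ has exactly one object. Its endomorphisms are the $2$-cells $!_A\to !_A$, which by definition are the elements $\alpha\in\GG(A)$ with $\gamma_A(\alpha)\circ !_A = !_A$; but that equation is automatic because both sides are morphisms $0\to A$ and there is only one such. Hence $\CC_\GG(0,A)$ is the one-object groupoid on the group $\GG(A)$, with vertical composition of $2$-cells being multiplication in $\GG(A)$ by definition --- that is precisely $\II(\GG(A))$. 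So the object-level isomorphism is essentially forced.

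Next I would verify naturality in $A$. Given $x\colon A\to B$ in $\CC$, postcomposition induces a functor $\CC_\GG(0,A)\to\CC_\GG(0,B)$; on the unique object it sends $!_A$ to $x\circ !_A = !_B$, and on a $2$-cell $\alpha\colon !_A\to !_A$ it produces the whiskered $2$-cell, which by the formula for horizontal composition in $\CC_\GG$ (namely $(\id[x])*\alpha$) is the element $\GG(x)(\alpha)\in\GG(B)$. On the other side, $\II\circ\GG$ sends $x$ to the functor $\II(\GG(x))$, which on morphisms is exactly $\GG(x)$. So the naming square commutes strictly, giving ordinary naturality. Finally, $2$-naturality: a $2$-cell in $\CC$ between $x,y\colon A\rightrightarrows B$ is an element $\beta\in\GG(B)$ with $\gamma_B(\beta)x=y$; one must check that the induced modification between the two whiskering functors is the one coming from $\II$ applied to the corresponding $2$-cell of $\Grp$ --- but $\II$ of any $2$-cell is trivial (groups as one-object categories have no nonidentity modifications at the level of $\II$), and correspondingly the modification on the $\CC_\GG$ side is determined by conjugation data that collapses since $\CC_\GG(0,B)$ has a single object; this is a routine check.

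I do not expect any real obstacle here: the statement is ``simple'' precisely because initiality of $0$ kills the constraint $\gamma_A(\alpha)\circ !_A=!_A$ and collapses each hom-category to a one-object groupoid. The only point requiring a moment's care is keeping straight that the $2$-naturality is with respect to the $2$-cells of $\Grp$ as encoded through $\II$ (which, recall, just forgets down to $\Cat$ and sees groups as one-object categories), so that both sides of the relevant modification equation are trivial; everything else is bookkeeping with the horizontal-composition formula $\beta*\alpha = \beta\cdot\GG(f_2)(\alpha)$ from the definition of $\CC_\GG$. I would present the proof in the order: (1) identify $\CC_\GG(0,A)$ with $\II(\GG(A))$ on objects/morphisms, (2) check naturality of postcomposition against $\GG(x)$, (3) dispatch $2$-naturality as a routine verification.
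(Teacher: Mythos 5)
Your core argument is the same as the paper's: initiality of $0$ makes $\CC_\GG(0,A)$ a one-object category whose endomorphisms are exactly the elements of $\GG(A)$, because the defining equation $\gamma_A(\alpha)\circ !_A=!_A$ holds vacuously; the paper's proof consists of precisely this observation and leaves the (2-)naturality checks implicit. One small correction to your $2$-naturality discussion: neither side is ``trivial'' there. A $2$-cell $\beta\colon x\to y$ in $\CC_\GG$ is sent by $\CC_\GG(0,-)$ to the natural transformation whose sole component is $\beta*\id[!_A]=\beta$, and $\II\circ\GG$ (with its canonical action on $2$-cells) likewise sends it to the transformation with component $\beta$ — its naturality being exactly the identity $\beta\cdot\GG(x)(\alpha)=(\GG(y)\alpha)\cdot\beta$ recorded in the definition of horizontal composition — so the $2$-naturality square commutes because both images literally coincide, not because they vanish.
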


\begin{proof} As $0$ is initial in $\CC$, for any object $A$ the category $\CC_\GG(0,A)$ has only one object corresponding the unique $1$-cell $0\to A$, so that morphisms in $\CC_\GG(0,A)$ correspond to its automorphisms. In turn, such automorphisms correspond to elements of $\GG(A)$ as the equation required from $2$-cells of $\CC_\GG$ holds vacuously for $0$.
\end{proof}

Many of our results hold under the assumption that the presheaf of sets underlying the presheaf of groups is representable. We now give an equivalent characterization of when this happens. 

\begin{theorem}\label{thm:representability} Let $(\GG \colon \CC\to\Grp,\gamma)$ be a crossed module and assume that $\CC$ has an initial object. Then the following are equivalent: 
\begin{enumerate}[label=(\roman*)]
  \item  The tensor $\mathbb{Z}\otimes 0$ of the initial object $0$ of $\CC$ by $\mathbb{Z}\in\Cat$ exists in $\CC_\GG$.
  \item The composite $\UU\circ\GG\colon\CC\to\Set$ is representable, where $\UU\colon \Grp\to\Set$ is the forgetful functor.
\end{enumerate}
Moreover, if $\CC$ is also cocomplete, then these are equivalent to $\GG$ having a left adjoint.
\end{theorem}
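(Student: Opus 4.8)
The plan is to prove the chain of equivalences (i)$\Leftrightarrow$(ii), and then the further equivalence with $\GG$ having a left adjoint under the cocompleteness hypothesis. The key observation linking everything is Lemma~\ref{lem:initial}: since $\CC$ has an initial object $0$, we have a $2$-natural isomorphism $\CC_\GG(0,-)\cong \II\circ\GG$. The tensor $\mathbb{Z}\otimes 0$, if it exists, is by definition an object $T$ of $\CC_\GG$ equipped with $2$-natural isomorphisms $\CC_\GG(T,A)\cong \Cat(\mathbb{Z},\CC_\GG(0,A))$. Here $\mathbb{Z}$ denotes the group $\mathbb{Z}$ regarded as a one-object groupoid; for any category (here groupoid) $\GG(A)$, the functor category $\Cat(\mathbb{Z},\GG(A))$ is the category whose objects are the elements of $\GG(A)$ (the images of the generator) and whose morphisms are conjugations. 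So the underlying \emph{set} of objects of $\Cat(\mathbb{Z},\CC_\GG(0,A))$ is exactly $\UU\GG(A)$.

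First I would unpack (i)$\Rightarrow$(ii): given the tensor $T=\mathbb{Z}\otimes 0$, apply the one-dimensional universal property. Maps $T\to A$ in $\CC$ correspond to objects of $\Cat(\mathbb{Z},\CC_\GG(0,A))$, which as just noted form the set $\UU\GG(A)$, naturally in $A$. Hence $\CC(T,-)\cong \UU\GG$, i.e.\ $\UU\GG$ is representable. For (ii)$\Rightarrow$(i), suppose $\UU\GG\cong\CC(T,-)$ for some object $T$; I would show $T$ carries the structure of the tensor $\mathbb{Z}\otimes 0$. The one-dimensional universal property is immediate from the definition of $T$ together with Lemma~\ref{lem:initial}, once one checks that a natural transformation $\CC(T,-)\cong\UU\GG$ automatically upgrades: the point is that $\UU\GG(A)$ is not merely a set but the object-set of the groupoid $\GG(A)=\CC_\GG(0,A)$, and elements of $\Cat(\mathbb{Z},\GG(A))$ \emph{are} elements of $\GG(A)$. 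The two-dimensional universal property needs checking that $2$-cells $g\to h$ between maps $g,h\colon A\to T$ — i.e.\ elements of $\GG(T)$ conjugating one to the other — correspond to morphisms in $\Cat(\mathbb{Z},\CC_\GG(0,A))$ between the corresponding objects, which are again conjugations in $\GG(A)$; here I expect to use the Peiffer identity and the naturality square from the crossed module axioms to match up the two conjugation actions, transporting along the representation $\CC(T,-)\cong\UU\GG$ and the action of $\GG(T)$ on $T$ via $\gamma_T$.

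For the final clause, assume $\CC$ is cocomplete. A left adjoint $L\colon\Grp\to\CC$ to $\GG$ exists iff each functor $\CC(-,?)\circ$ — more precisely, iff for each group $H$ the functor $\Grp(H,\GG(-))\colon\CC\to\Set$ is representable. I would reduce the existence of the whole left adjoint to representability of $\UU\GG=\Grp(\mathbb{Z},\GG(-))$ (the case $H=\mathbb{Z}$): since $\mathbb{Z}$ is a generator of $\Grp$ and every group is a colimit (in fact a reflexive coequalizer of free groups) of copies of $\mathbb{Z}$, and since $\CC$ is cocomplete, the representing objects $L(H)$ can be built from $L(\mathbb{Z})=T$ by the corresponding colimits in $\CC$ — this is the standard argument that a left adjoint exists as soon as it is defined on a generating object and the domain has the relevant colimits. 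Conversely, if $L\dashv\GG$ exists then $\UU\GG=\Grp(\mathbb{Z},\GG(-))\cong\CC(L\mathbb{Z},-)$ is representable, giving (ii).

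The main obstacle I anticipate is the (ii)$\Rightarrow$(i) direction, specifically verifying the \emph{two}-dimensional universal property of $T$ as a tensor: one must show the natural isomorphism $\CC(T,A)\cong\UU\GG(A)$ lifts to an isomorphism of categories $\CC_\GG(T,A)\cong\Cat(\mathbb{Z},\CC_\GG(0,A))$, which amounts to checking that the conjugation action of $\GG(T)$ on the fibre over $A$ is carried to the conjugation action inside $\GG(A)$. This is where the crossed-module axioms (naturality of $\gamma$ and the Peiffer identity) do the real work, and keeping the two conjugation actions straight is the delicate bookkeeping; everything else is a direct application of Lemma~\ref{lem:initial} and general nonsense about representable functors and colimit-preserving left adjoints.
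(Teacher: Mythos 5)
Your proposal follows essentially the same route as the paper: Lemma~\ref{lem:initial} identifies $\CC_\GG(0,-)$ with $\II\circ\GG$, the one-dimensional universal property of $\mathbb{Z}\otimes 0$ then becomes literally the representability of $\UU\circ\GG$, the two-dimensional universal property reduces via the Peiffer identity to matching the two conjugation actions, and the left-adjoint clause is the standard representability-plus-cocompleteness argument (which the paper simply cites from Bergman/Freyd). One slip to correct when writing out the delicate step: the relevant $2$-cells are between maps $T\to A$ and are therefore elements of $\GG(A)$, not $\GG(T)$, acting by postcomposition via $\gamma_A$; the check is that this action matches conjugation in $\GG(A)$, which follows by evaluating both maps on the universal element $\tau$ and applying the Peiffer identity, exactly as in the paper.
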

\begin{proof}
The tensor $\mathbb{Z}\otimes 0$ is defined by the isomorphism $\CC_\GG(\mathbb{Z}\otimes 0,-)\cong\Cat(\mathbb{Z},\CC_\GG(0,-))$, which in turn is isomorphic to $\Cat(\mathbb{Z},\II\circ\GG(-))$ by Lemma~\ref{lem:initial}. As these isomorphism are strictly $2$-natural, we can forget the $2$-cells and obtain a $1$-natural transformation, which we can then whisker with the $1$-functor $\Ob\colon\Cat\to\Set$ sending a small category to its set of objects. Moreover, the objects of $\Cat(\mathbb{Z},\II\circ\GG(A))$ correspond to group homomorphisms $\mathbb{Z}\to\GG(A)$ which in turn correspond to elements of $\UU\GG(A)$. As a result, we have a natural isomorphism
\[\CC(\mathbb{Z}\otimes 0,-)\cong \Ob(\CC_\GG(\mathbb{Z}\otimes 0,-))\cong \Ob(\Cat(\mathbb{Z},\II\circ\GG(-)))\cong \UU\circ\GG.\]
This means that (i) implies (ii) as the one-dimensional universal property of $\mathbb{Z}\otimes 0$ is exactly the same as the universal property of an object representing $\UU\circ\GG$.  

Let us us now analyze the two-dimensional universal property by considering the morphisms of $\Cat(\mathbb{Z},\GG(A))$. Given two objects of $\Cat(\mathbb{Z},\GG(A))$ corresponding to elements $\alpha,\beta\in\GG(A)$, a morphism $\alpha\to \beta$ in $\Cat(\mathbb{Z},\GG(A))$ is given by an element $\theta\in\GG(A)$ satisfying $\theta \alpha=\beta\theta$. The two-dimensional universal property of $\mathbb{Z}\otimes 0$ asserts that such a $\theta$ is induced by a unique $2$-cell between the corresponding $1$-cells $f_\alpha,f_\beta\colon \mathbb{Z}\otimes 0\rightrightarrows A$ in in $\CC_\GG$. 

With this in mind, let us now show that (ii) implies (i), so let $R$ be an object of $\CC$ representing $\UU\circ\GG$. Let us fix a universal element $\tau\in \UU\circ\GG(R)$ so that for every $A$ and $\alpha\in\GG(A)$ there exists a unique $f_\alpha\colon R\to A$ in \cat{C} such that $\GG(f_\alpha)\tau=\alpha$. We now need to show that, given two maps $f_\alpha,f_\beta\colon R\rightrightarrows A$ corresponding to elements $\alpha,\beta\in \GG(A)$ and a $2$-cell $\theta\colon f_\alpha\circ !_R\to f_\beta\circ !_R$ satisfying $\theta \alpha=\beta\theta$
, then $\theta$ already defines a $2$-cell $f_\alpha\to f_\beta$. In other words, we wish to show that $\gamma_A(\theta)\circ f_\alpha=f_\beta$. As maps out of $R$ are defined uniquely by where they map $\tau$, to check this we can equivalently compare where the two sides send $\tau$. On the right hand side, we get $\GG(f_\beta)\tau=\beta$. On the left hand side, we get 
\[\GG(\gamma_A(\theta)) (\GG(f_\alpha)\tau)=\GG(\gamma_A(\theta)) \alpha=\theta \alpha \theta^{-1}=\beta,\]
where the penultimate equality is the Peiffer identity identity and the last condition is equivalent to our condition on $\theta$. Thus (ii) indeed implies (i), as any object satisfying the one-dimensional universal property of $\mathbb{Z}\otimes 0$ automatically satisfies the two-dimensional one.

Assume now that $\CC$ is cocomplete. If $\GG$ has a left adjoint $L$, then $L(\mathbb{Z})$ is an object representing $\UU\circ \GG$. Conversely, if $\UU\circ\GG$ is representable, the existence of a left adjoint follows from~\cite[Theorem 10.4.3]{bergman:textbook}, see also~\cite{freyd:algebravaluedfunctors}.
\end{proof}

\begin{example} 
\begin{itemize}
  \item The isotropy functor $\ZZ\colon\cat{Mon}\to\Grp$ sending a monoid to its invertible elements has a left adjoint given by the inclusion $\Grp\hookrightarrow \cat{Mon}$. 
  \item The techniques of~\cite{hofstraetal:isotropyofalgtheories,parker:thesis} can be used to show that the isotropy group of a semiring is given by its group of units. This has a left adjoint given by sending a group to its group semiring. 
  \item There is a similar adjunction sending a (unital but not necessarily commutative) ring to its group of units and sending a group to its group ring. More generally, one can send an algebra over a ring to its group of units 
  with the left adjoint given by sending a group to its group algebra. However, these adjunctions are not of the form $L\dashv \ZZ$, because $\ZZ_{\cat{R-alg}}$ is not given by sending a ring to its group of units. To see why, note that if $u$ is a unit in a ring, then conjugating with $u$ and with $-u$ gives rise to the same (extended) inner automorphism. In fact, Bergman~\cite{bergman:inner} shows that the isotropy group of an algebra over a field is given by the group of units of the algebra modulo the group of units of the field. 
\end{itemize}
\end{example}

From now on, we say that a crossed module $(\GG \colon \CC\to\Grp,\gamma)$ is representable whenever the composite  $\UU\circ\GG\colon\CC\to\Set$ is representable. Note that if $\CC$ has finite coproducts, the representing object is equipped with the structure of a cogroup object, \ie a group object in $\CC\op$.
We now give our main positive results concerning two-dimensional (co)limits in s$\CC_\GG$.  

\begin{theorem}\label{thm:positiveresults} Let $(\GG \colon \CC\to\Grp,\gamma)$ be a representable crossed module.
Then
  \begin{enumerate}[label=(\roman*)]
    \item All limits in $\CC$ are $2$-limits in  $\CC_\GG$.
    \item If $\CC$ is finitely cocomplete, then $\CC_\GG$  has coinserters, coequifiers and coidentifiers.
    \item If $\CC$ is cocomplete, then $\CC_\GG$ has all tensors by a monoid.
  \end{enumerate}
\end{theorem}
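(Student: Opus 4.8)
The plan is to handle the three claims largely separately, using representability to reduce two-dimensional universal properties to one-dimensional ones, in the spirit of Theorem~\ref{thm:representability}. Throughout, write $R$ for the representing object of $\UU\circ\GG$ with universal element $\tau\in\GG(R)$, so that maps $R\to A$ in $\CC$ correspond bijectively to elements of $\GG(A)$ via $f\mapsto\GG(f)\tau$.

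For (i), let $(L,(p_j))$ be a $1$-limit of $D\colon\JJ\to\CC_\GG$ (note the diagram must land in $\CC$, i.e.\ have no nontrivial $2$-cells between the $p_j D(f)$'s, but since we only claim $1$-limits of $\CC$ become $2$-limits this is the right statement). The one-dimensional universal property is inherited from $\CC$, so the work is the two-dimensional one: given $1$-cells $f,g\colon A\rightrightarrows L$, a $2$-cell $\alpha\colon f\to g$ is an element $\alpha\in\GG(L)$ with $\gamma_L(\alpha)f=g$, and a modification between the induced cones is a family $\alpha_j\colon p_jf\to p_jg$, i.e.\ elements $\alpha_j=\GG(p_j)\alpha'$ for some candidate $\alpha'\in\GG(L)$, compatible with the $D(f)$'s. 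The key step is to show such a modification determines a unique element of $\GG(L)$: I would use that the cone $(p_j)$ is jointly monic in $\CC$ together with representability. Concretely, an element of $\GG(L)$ is a map $R\to L$, which by the one-dimensional universal property of $L$ is exactly a cone on $R$, i.e.\ a compatible family of maps $R\to D(j)$, i.e.\ a compatible family of elements of $\GG(D(j))$ — and that is precisely the data of the modification. One then checks the resulting $\alpha'\in\GG(L)$ satisfies $\gamma_L(\alpha')f=g$ by post-composing with each $p_j$ and using joint monicity. I expect this to go through smoothly once the bookkeeping between ``cones on $R$'' and ``modifications'' is set up.

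For (ii), I would construct each colimit explicitly by mimicking the $\Grp$ constructions, exactly as the HNN-extension example in the introduction suggests. For the coinserter of $f,g\colon A\rightrightarrows B$: form in $\CC$ the pushout-type object obtained by adjoining a copy of $R$ and forcing the universal element to conjugate $f$ into $g$; precisely, take the coequalizer/pushout that universally adds a map $r\colon R\to C$ with $\gamma_C(\GG(r)\tau)\circ(\text{image of }f)=\text{image of }g$. Since $\CC$ is finitely cocomplete this exists. Then I would verify the one-dimensional universal property of the coinserter directly — a cocone is a map $h\colon B\to X$ together with a $2$-cell $hf\to hg$, i.e.\ an element of $\GG(X)$ with the conjugation property, i.e.\ (by representability) a map $R\to X$ with that property, which is exactly the data the pushout was built to classify — and then invoke a representability argument as in Theorem~\ref{thm:representability} to get the two-dimensional universal property for free. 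Coequifiers and coidentifiers are analogous but easier: a coequifier of two $2$-cells is built by a coequalizer in $\CC$ that forces two elements of $\GG$ to become equal after applying $\GG$ of the colimit map, and a coidentifier forces an element of $\GG$ to become trivial; both are plain coequalizers in $\CC$ of suitable maps out of $R$.

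For (iii), the tensor $M\otimes A$ of an object $A$ by a monoid $M$ (viewed as a one-object category) should be built as follows: its defining property $\CC_\GG(M\otimes A,-)\cong\mathrm{Cat}(M,\CC_\GG(A,-))$ unwinds, by cocompleteness and an argument parallel to Lemma~\ref{lem:initial} and Theorem~\ref{thm:representability}, to adjoining to $A$ one copy of $R$ for each generator-type datum of $M$ subject to relations encoding the monoid multiplication — more cleanly, one forms the appropriate colimit in $\CC$ of a diagram built from $A$ and copies of $R$ indexed by $M$. Since $\CC$ is cocomplete this colimit exists, and representability again upgrades the one-dimensional universal property to the full two-dimensional one. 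The main obstacle, and the step I would spend the most care on, is (iii): correctly identifying which colimit in $\CC$ computes $M\otimes A$ and checking that the monoid axioms of $M$ translate into exactly the right coequalizer relations so that maps out of the colimit biject with functors $M\to\CC_\GG(A,-)$; by contrast (i) and (ii) are fairly direct once the ``maps out of $R$ = elements of $\GG$'' dictionary and the representability trick of Theorem~\ref{thm:representability} are in hand.
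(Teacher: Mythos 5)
Your parts (i) and (ii) follow essentially the paper's route. For (i), the paper phrases your ``maps $R\to L$ = cones on $R$ = compatible families of elements of $\GG(D(j))$'' observation as continuity of $\GG$ (a representable functor preserves limits), then checks $\gamma(\alpha)h=k$ via the jointly monic cone exactly as you propose. For (ii), the paper's coinserter is precisely your construction: form $B+R$ and coequalize $\gamma(\GG(i_R)\tau)i_Bf$ with $i_Bg$; coequifiers and coidentifiers are coequalizers of the maps out of $R$ classifying the relevant elements of $\GG(B)$. One caution on (ii): the two-dimensional universal property is not ``for free'' from representability. You must still check on the $R$-summand that a $2$-cell $\beta\colon qci_B\to rci_B$ satisfying the modification condition $(\beta g)(q\alpha)=(r\alpha)(\beta g)$ yields $\gamma(\beta)qci_R=rci_R$, and this computation uses both the Peiffer identity and that modification condition; it is the same pattern as in the proof of Theorem~\ref{thm:representability}, but it is a genuine verification, not a formal consequence of the one-dimensional property. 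Where you genuinely diverge is (iii). The paper does not present $M\otimes A$ by a single colimit of copies of $R$ indexed by $M$. Instead it (a) constructs $G\otimes A$ explicitly only for $G$ free on a set $X$, as a coequalizer on $A+\coprod_{x\in X}R$ forcing each $\tau_x$ to stabilize the image of $A$; (b) extends to arbitrary groups by writing $G$ as a coequalizer of free groups and using cocontinuity of $(-)\otimes A$ together with Theorem~\ref{thm:connectedcolims}; and (c) handles a general monoid $M$ by noting that all $2$-cells in $\CC_\GG$ are invertible, so any functor $M\to\CC_\GG(A,C)$ factors through the group completion $G$ of $M$ and hence $M\otimes A\cong G\otimes A$. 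Your direct approach could be made to work, but to impose ``relations encoding the monoid multiplication'' as coequalizers in $\CC$ you would need the cogroup structure on the representing object $R$ to express products of elements of $\GG$ as maps out of $R$, and the relations you can impose will only ever see the group completion of $M$ in any case --- so you would end up rediscovering step (c). The paper's detour through free groups and connected colimits sidesteps both issues and is the cleaner path to fill in.
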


\begin{proof}
(i) Given a diagram $D\colon \cat{J}\to \CC$ and a limit $(L,(l_j)_{j\in \cat{J}})$ of $D$ in $\CC$, we need to show that $L$ satisfies the required two-dimensional universal property in $\CC_\GG$. That is, given two maps $h,k\colon A\rightrightarrows L$ and $2$-cells $\alpha_j\colon l_j h\to l_j k$ satisfying 
  \begin{equation}\tag{$*$}\label{eq:modificationoflimitcones}D(f)\alpha_j=\alpha_k \text{ for every }f\colon j\to k\text{ in }\cat{J}\end{equation}
we need to show that there is a unique $2$-cell $\alpha\colon h\to k$ such that $\alpha_j=l_j\alpha$ for each $j\in\cat{J}$. Unwinding the definition of $\CC_\GG$, we have elements $\alpha_j\in \GG(D(j))$ such that $\gamma(\alpha_j)l_jh=l_jk$ for every $j\in\cat{J}$ and $\GG D(f)\alpha_j=\alpha_k$ for every $f\colon j\to k$ in $\cat{J}$. As $\UU\circ\GG$ is representable, $\GG$ is continuous and hence there exists a unique $\alpha\in\GG(L)$ such that $\GG(l_j)\alpha=\alpha_j$ for each $j$. It remains to check that $\alpha$ defines a $2$-cell $h\to k$, \ie that $\gamma(\alpha) h=k$. But this follows since, for any $j$ we have 
\[l_j\gamma(\alpha) h=\gamma(\alpha_j) l_jh=l_jk\]
by \eqref{eq:modificationoflimitcones}. 
 
(ii) Let $(R,\tau\in \GG(R))$ be a representation of $\UU\circ \GG$. To construct the coinserter of $f,g\colon A\rightrightarrows B$, we mimic the construction in $\Grp$. We first form the coproduct $B+R$, which intuitively speaking adds an inner automorphism to $B$ freely. We then take a quotient to force this inner automorphism to become a $2$-cell between the projections of $f$ and $g$ to the quotient. More formally, let $c\colon B+R\to C$ be the coequalizer of $\gamma(\GG(i_R)\tau)i_B f, i_B g\colon A\rightrightarrows B+R$. Denote $\GG(ci_R)\tau\in \GG(C)$ by $\alpha$. Now $\alpha$ defines a $2$-cell $ci_Bf\to ci_Bg$ since 
\[\gamma(\alpha) ci_Bf=\gamma(\GG(ci_R)\tau)ci_Bf=c\gamma(\GG(i_R)\tau)i_Bf=ci_Bg.\]
We claim that $(C,\alpha)$ is the coinserter of $f$ and $g$. To check the one-dimensional universal property, consider an arbitrary $k\colon B\to D$ with $\beta\in \GG(D)$ defining a $2$-cell $kf\to kg$. The one-dimensional universal property states that any such $k$ factors uniquely through the canonical map $c\circ i_B B\to C$, say via $q\colon C\to D$, and that the $2$-cell $\beta\colon kf\to kg$ satisfies $\beta=q\alpha$. 

To prove this, let $h\colon R\to D$ be the unique map such that $\GG(h)\tau=\beta$. Then $[k,h]\colon B+R$ satisfies 
\[[k,h]\gamma(\GG(i_R)\tau) i_B f=\gamma(\GG(h)\tau)[k,h] i_B f=\gamma(\beta) k f=kg=[k,h]i_Bg\]
and hence factors uniquely through $c$ via $q\colon C\to X$. By construction, $\GG(q)\alpha=\beta$ and $k=q ci_B$, and clearly $q$ is uniquely determined by these equations. 

To show that $(C,\alpha)$ has the appropriate two-dimensional universal property, consider arbitrary maps $q,r\colon C\to D$ and $\beta\in \GG(D)$ defining a $2$-cell $qci_B\to rci_B$ such that 
\begin{equation}\tag{$*$}\label{eq:pastingassumption}(\beta g)(q \alpha)=(r\alpha)(\beta g)\end{equation}
The two-dimensional universal property states that any such $2$-cell $\beta$ is induced by a unique $2$-cell $q\to r$. Thus it suffices to show that $\beta\in \GG(D)$ defines a $2$-cell $q\to r$, \ie that $\gamma(\beta) q=r$. As $c\colon B+R\to C$ is epic, it suffices to show that $\gamma(\beta) qc=rc$, which in turn can be checked by precomposing with $i_B$ and $i_R$. For $i_B$ this follows from the assumption that $\beta$ defines a $2$-cell $qci_B\to rci_B$. For $i_R$, we wish to show that
 \begin{equation}\tag{$**$}\label{eq:2dUMP}\beta_{\id} qci_R=rci_R.\end{equation}
 
 As maps out of $R$ are determined by where they send $\tau\in \GG(R)$, let us compute it for both sides of \eqref{eq:2dUMP}. Recall that $\alpha=\GG(ci_R)\tau$, so that left side sends $\tau$ to  $\GG(\gamma(\beta))\GG(q)\alpha=\beta (\GG(q)\alpha)\beta^{-1}$ by the Peiffer identity. The right hand side sends $\tau$ to $\GG(r)\alpha$. Thus~\eqref{eq:2dUMP} is equivalent to 
 \[\beta (\GG(q)\alpha)=(\GG(r)\alpha)\beta\] 
 which in turn follows from~\eqref{eq:pastingassumption}, as desired.

To find the coequifier of $\alpha,\beta\colon f\rightrightarrows g$ where $f,g\colon A\rightrightarrows B$, consider the two maps from the representing object $R$ that correspond to $\alpha$ and $\beta$, and take their coequalizer $c\colon B\to C$. By construction, $c\alpha=c\beta$, so let us show that $C$ is universal as such. Indeed, if $d\colon B\to D$ satisfies $d\alpha=d\beta$, then $d$ factors uniquely through the coequalizer. To show the two-dimensional universal property, consider $h,k\colon C\rightrightarrows D$ and an arbitrary $2$-cell $\theta\colon hc\to kc$. The two-dimensional universal property states that $\theta$ is induced by a unique $2$-cell $h\to k$. Now,   $\theta\in \GG(D)$ satisfies $\gamma(\theta) hc=kc$, so that $\gamma(\theta) h=k$ as $c$ is epic. Thus $\theta$ defines a $2$-cell $h\to k$ as desired, and is clearly unique as such.

The coidentifier of $\alpha\colon f\to g$ where $f,g\colon A\rightrightarrows B$ is constructed by taking the coequalizer of the two maps $R\to D$ that correspond to $\alpha,0\in\ZZ(B)$. The proof of the universal property is similar to the case of a coequifier.

For (iii), we will first construct the tensor $G\otimes B$ when $G$ is a free group on some set $X$. First, let $(R,\tau)$ represent $\UU\circ \GG$, and form the coproduct $B+\coprod_{x\in X} R$. Write $\tau_x$ for the $x$-th inclusion of $\tau$ (\ie the element $\GG(i_x)\tau\in \GG(B+\coprod_{x\in X} R)$), and let $c\colon B+\coprod_{x\in X} R\to  G\otimes B$ denote the (wide) coequalizer of $i_B$ with $\gamma(\tau_x) i_B$ for every $x\in X$. Writing $\alpha_x$ for the image of $\tau_x$ under the projection to $C$, we have $\gamma(\alpha_x)_c i_B=c\gamma(\tau_x) i_B=ci_B$, so each $\alpha_x$ defines an endo-$2$-cell on $ci_B\colon B\to G\otimes B$, and hence we have a functor $G\to \CC_\GG(B,G\otimes B)$. Let us now show that it is universal as such. For the one-dimensional universal property, we need to show that for any object $C$ of $\CC$, any functor $G\to \CC_\GG(B,C)$ factors via the canonical one $G\to \CC_\GG(B,G\otimes B)$ through a unique $1$-cell $g\colon G\otimes B\to C$. As $G$ is free, any such functor is given by a choice of a morphism $k\colon B\to C$ and $2$-cells $\beta_x\colon k\to k$ for $x\in X$. Now, by the universal property of  $B+\coprod_{x\in X} R$  there is a unique map $f\colon B+\coprod_{x\in X} R$ such that $fi_B=k$ and $\GG(f)\tau_x=\beta_x$. Now 
\begin{equation}\tag{$*$}\label{eq:2d-umpfortensor}f\gamma(\tau_x)i_B=\gamma(\beta_x) fi_B=\gamma(\beta_x)k=k=fi_B \end{equation}
so that $f$ factors uniquely via $g\colon G\otimes B\to C$. Now, $g$ satisfies $gci_B=k$ and $g\alpha_x=\beta_x$, and clearly $g$ is unique as such. 

For the two-dimensional universal property, consider two maps $f,g\colon G\otimes B\to C$ and a $2$-cell $\beta\colon fci_B\to gci_B$ satisfying
\[(\beta)(f\alpha_x)=(g\alpha_x)\beta.\]
The two-dimensional universal property states that any such $2$-cell is induced by a unique $2$-cell $f\to g$. The only possible choice for such a $2$-cell is given by  $\beta\in\GG(C)$, so it is enough to show that $\beta$ defines a $2$-cell $f\to g$. As $c$ is epic it suffices to show that $\gamma(\beta) fc=\gamma(\beta) gc$. It remains to show that this equation is true when precomposed by any of the summands of $B+\coprod_{x\in X} R$. For $i_B$ this follows from the assumption that $\beta$ defines a $2$-cell $fci_B\to gci_B$. For $x\in X$, it suffices to compute the image of $\alpha_x$: 
\[\GG(\gamma(\beta))\GG(f)\alpha_x=\beta(\GG(f)\alpha_x)\beta^{-1}\]
where the first equation uses the Peiffer identity  and the second comes from \eqref{eq:2d-umpfortensor}.

Now, an arbitrary group $G$ can be given as a coequalizer, and in particular, as a connected colimit (in $\Grp$) of free groups. Moreover, the inclusion $\Grp\to\Cat$ preserves connected colimits.
As $(-)\otimes B$ is cocontinuous in the group $G$, the tensor $G\otimes B$ hence exists due to $\CC_\GG$ having connected colimits by Theorem~\ref{thm:connectedcolims}.

Finally, as any $2$-cell in $\CC_\GG$ is invertible, for any monoid $M$ any functor $M\to \CC_\GG(A,B)$ factors uniquely via the group $G$ obtained by universally adding inverses to elements of $M$, so that $M\otimes B$ is isomorphic to $G\otimes B$. 
\end{proof}

For (i), note that the one-dimensional universal property is known to imply the two-dimensional one if the $2$-category in question has cotensors by the walking arrow category $\cat{2}$. However, we will see that this particular limit cannot exist unless $\GG$ is trivial, so our result does not follow from this standard fact. For (ii), note that our construction of the coinserter mimics the usual construction of HNN-extensions in $\Grp$. Curiously, the construction uses coproducts in $\CC$ to construct a strict $2$-colimit in $\CC_\GG$, despite the fact that coproducts in $\CC$ rarely satisfy the two-dimensional universal property in $\CC_\GG$, as we will see. 

For (iii), note that the existence of tensors is known to follow from coproducts, coinserters and coequifiers~\cite[Dual of Proposition 4.4.]{kelly:2-dlimits}. However, the coproducts in $\CC$ do not in general satisfy the two-dimensional universal property in $\CC_\GG$, so another proof strategy is required. In fact, $2$-categorical coproducts (and many other (co)limits) cannot exist in $\CC_\GG$ unless $\GG$ is trivial.

\begin{theorem}\label{thm:obstructionsforcolims}[Obstructions for colimits]  Let $(\GG \colon \CC\to\Grp,\gamma)$ be a crossed module  where $\CC$ has an initial object $0$. Then the following are equivalent:
  \begin{enumerate}[label=(\roman*)]
    \item $\GG$ is trivial, \ie each $\GG(A)$ is the trivial group.
    \item The coproduct $0+0\cong 0$ is a $2$-coproduct in $\CC_\GG$.
    \item The initial object of $\cat{C}$ is $2$-initial in $\CC_\GG$.
    \item $\cat{C}_\GG$ has tensors by some fixed category $\DD$ that is not a monoid.
  \end{enumerate}
\end{theorem}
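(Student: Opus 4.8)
The plan is to establish that each of (ii), (iii) and (iv) is equivalent to (i), with (i) as the pivot. The implications (i) $\Rightarrow$ (ii), (iii), (iv) are the easy half: if $\GG$ is trivial then each $\gamma_B$ is trivial, so the identity is the only $2$-cell between any two parallel $1$-cells (and none exists between distinct ones), hence $\CC_\GG$ is just $\CC$ regarded as a locally discrete $2$-category. In a locally discrete $2$-category every $1$-dimensional (co)limit is automatically a $2$-(co)limit, which gives (ii) and (iii); and (iv) holds because $\CC$ then has tensors by the walking arrow $\cat{2}$ with $\cat{2}\otimes A=A$, using that $\Cat(\cat{2},S)\cong S$ whenever $S$ is discrete, and $\cat{2}$ is not a monoid.

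For the converse implications (ii), (iii) $\Rightarrow$ (i) I would use Lemma~\ref{lem:initial} to identify $\CC_\GG(0,A)$ with $\II\GG(A)$, the group $\GG(A)$ viewed as a one-object groupoid. For (iii), $2$-initiality of $0$ forces $\CC_\GG(0,A)\cong\cat{1}$ for all $A$, hence $\GG(A)$ trivial. For (ii), the $1$-categorical coproduct of $0$ with itself is $0$, with both coprojections equal to $\id[0]$; so the comparison functor testing the two-dimensional universal property of the coproduct is the diagonal $\CC_\GG(0,A)\to\CC_\GG(0,A)\times\CC_\GG(0,A)$, which under Lemma~\ref{lem:initial} becomes the diagonal of $\II\GG(A)$ --- and the diagonal functor of a one-object groupoid on a group is an isomorphism of categories exactly when that group is trivial.

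The substantive implication is (iv) $\Rightarrow$ (i), which I expect to be the main obstacle. Assuming $\CC_\GG$ has tensors by some $\DD$ with two distinct objects $d_0, d_1$, I would put $T:=\DD\otimes 0$ and combine the defining isomorphism $\CC_\GG(T,A)\cong\Cat(\DD,\CC_\GG(0,A))$ with Lemma~\ref{lem:initial} to obtain a $2$-natural isomorphism $\CC_\GG(T,A)\cong\Cat(\DD,\II\GG(A))$, implemented by the universal cocone $\iota\colon\DD\to\CC_\GG(0,T)$ via $g\mapsto\CC_\GG(0,g)\circ\iota$. The crucial observation is that $0$ is initial, so $\CC_\GG(0,T)$ has a single object and $\iota$ is constant on objects; chasing the definition of horizontal composition in $\CC_\GG$ then shows that a $2$-cell $\theta\colon g\to g'$ between $1$-cells $g,g'\colon T\rightrightarrows A$ is sent under this isomorphism to a natural transformation whose every component equals $\theta\in\GG(A)$. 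Since the isomorphism is a bijection on objects and on hom-sets, I conclude that \emph{every} natural transformation between functors $\DD\to\II\GG(A)$ has all of its components equal. But if $g\in\GG(A)$ is nontrivial, the family $\eta$ with $\eta_{d_1}=g$ and $\eta_d=1$ for $d\neq d_1$ is a natural transformation from the constant functor at $1$ to the functor sending $f$ to $\eta_{\cod f}\,\eta_{\dom f}^{-1}$, and it is not constant since $\eta_{d_0}=1\neq g$ --- a contradiction. Hence $\GG(A)$ is trivial for every $A$. (If instead $\DD$ fails to be a monoid because it is empty, then $\DD\otimes 0$ is a $2$-initial object of $\CC_\GG$, which is $1$-initial and hence isomorphic to $0$, reducing to (iii).)

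The delicate point in that last argument is the translation of the tensor's two-dimensional universal property, namely pinning down exactly which natural transformations in $\Cat(\DD,\II\GG(A))$ get hit; this rests entirely on the fact that all $1$-cells $0\to T$ coincide. Once that identification is secured, the contradiction is produced by the elementary remark that, as soon as the indexing category has two objects, a nontrivial group of ``components'' admits a non-constant natural transformation between transported copies of the trivial functor.
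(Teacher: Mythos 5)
Your proposal is correct and follows essentially the same route as the paper: (i) as the pivot, Lemma~\ref{lem:initial} for (ii),(iii) $\Rightarrow$ (i) via the diagonal of $\II\GG(A)$, and for (iv) $\Rightarrow$ (i) the observation that a $2$-cell out of a tensor induces a natural transformation with all components equal, contradicted by a transformation with distinct components at two objects. The only cosmetic difference is that you run the tensor argument at $\DD\otimes 0$ through Lemma~\ref{lem:initial}, whereas the paper applies the two-dimensional universal property of $\DD\otimes A$ directly at an arbitrary object $A$; both are valid, and your explicit choice of $\cat{2}$ for (i) $\Rightarrow$ (iv) fills in a detail the paper leaves implicit.
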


\begin{proof}

If $\GG(A)=0$ for all $A$ in $\CC$, then $\CC_\GG$ is locally discrete so (ii)-(iv) hold trivially. 

``(ii)$\Rightarrow $(i)'':
If $0+0\cong 0$ is a two-dimensional coproduct, then  Lemma~\ref{lem:initial} implies that 
  \[\II\circ\GG(-)\cong \CC_\GG(0,-)\cong \CC_\GG(0+0,-)\cong \CC_\GG(0,-)\times \CC_\GG(0,-)\cong \II\circ \GG(-)\times \II\circ \GG(-).\] Moreover, this isomorphism is the diagonal map as it is induced by the isomorphism $0\cong 0+0$, forcing $\GG$ to be trivial.

``(iii)$\Rightarrow $(i)'':
If $0$ is $2$-initial, then by Lemma~\ref{lem:initial} we have $\II\circ\GG(-)\cong \CC_\GG(0,-)\cong\Delta\cat{1}$, proving the claim. 

``(iv)$\Rightarrow $(i)'': the tensor of an object $A$ with the empty category $\cat{0}$, if it were to exist, would be an object $\cat{0}\otimes A$ such that $\CC_\GG(\cat{0}\otimes A,-)\cong\Cat(\cat{0},\CC_\GG(A,-))\cong\Delta\cat{1}$, \ie a $2$-initial object, so the claim follows from (iii)$\Rightarrow $(i).

Let $\DD$ be a small category with more than one object and assume that the tensor $\DD\otimes A$ of $A$ with $\DD$ exists, and consider the constant functor $\Delta(\id)\colon\DD\to\CC_\GG(A,A)$ at $\id[A]$. Pick any two $\alpha,\beta\in\GG(A)$, and define for each object of $\DD$ a $2$-cell out of $\id[A]$ by picking $\alpha\colon \id[A]\to \gamma(\alpha),\beta\colon  \id[A]\to \gamma(\beta)$ for two distinct objects and the identity for everything else: there is a unique functor $F\colon\DD\to\CC_\GG(A,A)$ for which this defines a natural transformation $\Delta(\id)\to F$. By the two-dimensional universal property of $\DD\otimes A$, this natural transformation has to be induced by a unique $2$-cell between the maps $\DD\otimes A \to A$ corresponding to $\Delta(\id)$ and $F$, so that in particular we must have $\alpha=\beta$, so that $\GG(A)=0$. 
\end{proof}

The implication ``(ii)$\Rightarrow $(i)'' above can be readily strengthened to show more specifically that $\GG(C)=0$ whenever $A+B$ is a two-dimensional coproduct in $\CC_\GG$ and there are maps $A\rightarrow C\leftarrow B$: just consider arbitrary two elements of $\GG(C)$ and let one act on the map from $A$ and the other on the map from $B$. Consequently, a two-dimensional coproduct can only exist in $\CC_\GG$ in the trivial case when its two-dimensional universal property holds trivially. Note also that the obstructions above really come from the two-dimensional properties, and hence these hold even if one worked with bicategorical colimit notions replacing $[\JJ,\Cat]$ with the category of pseudofunctors, pseudonatural transformations and modifications, and required that the (co)limit represents the appropriate functor only up to pseudonatural equivalence rather than up to $2$-natural isomorphism.

While all limits of $\CC$ become $2$-limits in $\CC_\GG$ when $\GG$ is representable, we now show that there are hardly any genuinely $2$-categorical limits in $\CC_\GG$.

\begin{theorem}\label{thm:limshavetrivial2cells}[A general obstructions for limits] Let  $(\GG \colon \CC\to\Grp,\gamma)$ be a crossed module where $\CC$ has an initial object. Let
 $\JJ$  be a locally discrete $2$-category, $W\colon \JJ\to \Cat$ a weight and $D\colon \JJ\to\CC$ be a diagram in $\CC$. If the weighted limit $\lim^W D$ exists in $\CC_\GG$, then the $2$-cells appearing in every $W$-weighted cone for $D$ are trivial. 

In this case, the weighted two-dimensional limit $\lim^W D$ in $\CC_\GG$ can be equivalently described as the limit $\lim^{\Pi_0 W} D$  in $\CC$, where $\Pi_0\colon \Cat\to\Set$ is the  functor sending a category to its connected components. Hence, all existing weighted limits in $\CC_\GG$ where the indexing shape is a mere category can be expressed as conical limits in $\CC$.
\end{theorem}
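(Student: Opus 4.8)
The plan is to exploit Lemma~\ref{lem:initial} together with the hypothesis that $\JJ$ is locally discrete, so that the only $2$-cells in the diagram category $[\JJ, \CC_\GG]$ that matter are the ones living in the cone itself. First I would unwind what a $W$-weighted cone for $D$ with vertex an object $A$ amounts to: since $\JJ$ is locally discrete, a $2$-natural transformation $W \to \CC_\GG(A, D(-))$ consists of the usual cone data (a functor $W(j) \to \CC_\GG(A, D(j))$ for each $j$, compatible with the action of morphisms of $\JJ$), but the $2$-cells of $W(j)$ — which are genuine, since $W(j)$ is an arbitrary small category — get sent to $2$-cells of $\CC_\GG(A, D(j))$, i.e.\ to elements of $\GG(D(j))$. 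These are the ``$2$-cells appearing in the cone''. The claim is that if $\lim^W D$ exists in $\CC_\GG$, all of these are trivial.

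The key step is to test the two-dimensional universal property of $\lim^W D$ against cones built out of the initial object $0$. Suppose, for contradiction, that some $W$-weighted cone $\mu$ on some vertex $A$ has a nontrivial $2$-cell component, say $\mu$ sends some morphism $u$ of $W(j)$ to a nontrivial $\alpha \in \GG(D(j))$. By Lemma~\ref{lem:initial}, $\CC_\GG(0, -) \cong \II \circ \GG$, so an arbitrary element $\beta \in \GG(D(j))$ is the same thing as a $2$-cell on the unique map $0 \to D(j)$, and more generally any morphism $x\colon 0 \to A$ lets us transport $\mu$ to a $W$-weighted cone $\mu \cdot x$ on $0$ whose $2$-cell at $u$ is $\GG(x)(\alpha)$ — but by Lemma~\ref{lem:initial} again we can also independently modify each such cone on $0$ by postcomposing with arbitrary elements of $\GG$. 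Concretely: the one-dimensional universal property says cones on $0$ biject with maps $0 \to \lim^W D$, hence (again by Lemma~\ref{lem:initial}) with elements of $\GG(\lim^W D)$; the two-dimensional property says modifications between such cones biject with $2$-cells between these maps, i.e.\ again with elements of $\GG(\lim^W D)$. Now I would compare this bijection with the direct description of modifications of cones on $0$: a modification between two $W$-weighted cones on $0$ is a compatible family of $2$-cells, but — and this is where local discreteness of $\JJ$ enters crucially — the compatibility with the $2$-cells of $W$ forces the modification to commute with the chosen $2$-cell $\alpha$ in $\GG(D(j))$, whereas there is no such constraint coming from $\JJ$. Choosing the two cones on $0$ to differ exactly by an element $\theta \in \GG(\lim^W D)$ that does not commute with (the image of) $\alpha$ then yields a modification on the $W$-side that is not induced by any $2$-cell, contradicting the universal property — unless $\alpha$ is central in a way that, upon varying everything, forces $\alpha = \id$. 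I expect this centrality-forcing argument — pinning down exactly which pair of cones on $0$ to use and checking that the resulting modification genuinely fails to be induced — to be the main obstacle; it is essentially the same maneuver as in the proof of ``(iv)$\Rightarrow$(i)'' of Theorem~\ref{thm:obstructionsforcolims}, where a constant diagram is perturbed by two group elements at two objects, and the universal property collapses them.

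Once triviality of the cone $2$-cells is established, the second assertion is a formality. A $W$-weighted cone on $D$ with no nontrivial $2$-cells is precisely a cone in which each functor $W(j) \to \CC_\GG(A, D(j))$ factors through the discrete category $\Pi_0 W(j)$, i.e.\ a $\Pi_0 W$-weighted cone in the ordinary sense; conversely every $\Pi_0 W$-weighted cone trivially gives a $W$-weighted one. Since, by part~(i) of Theorem~\ref{thm:positiveresults} — or more elementarily since conical limits in $\CC$ remain limits here once we know the relevant $2$-cells vanish — the object representing $\Pi_0 W$-weighted cones is just $\lim^{\Pi_0 W} D$ computed in $\CC$, and its one- and two-dimensional universal properties in $\CC_\GG$ match, we conclude $\lim^W D \cong \lim^{\Pi_0 W} D$. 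Finally, when the indexing shape is a mere category $\JJ$ (viewed as a locally discrete $2$-category) the weight $W\colon \JJ \to \Cat$ has $\Pi_0 W$ a set-valued functor, and a limit weighted by a set-valued functor is a conical limit over the corresponding discrete fibration (or, when $W$ is $\cat{1}$, over $\JJ$ itself), giving the last sentence.
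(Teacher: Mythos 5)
There is a genuine gap in the first (and main) half of your argument, and it stems from a misreading of Lemma~\ref{lem:initial}. You assert that ``cones on $0$ biject with maps $0\to\lim^W D$, hence \ldots with elements of $\GG(\lim^W D)$.'' But Lemma~\ref{lem:initial} identifies the hom-\emph{category} $\CC_\GG(0,A)$ with $\II\GG(A)$: it has a \emph{single object} (the unique $1$-cell out of the initial object) and $\GG(A)$ worth of \emph{morphisms}. So maps $0\to\lim^W D$ form a one-element set, and the one-dimensional universal property forces there to be exactly \emph{one} $W$-weighted cone on $0$. There are no ``two cones on $0$ differing by $\theta\in\GG(\lim^W D)$'' to play against each other, so the centrality-forcing contradiction you sketch --- which you yourself flag as the unexecuted ``main obstacle'' --- cannot be set up as described. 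The whole second-order apparatus (modifications, the two-dimensional universal property, non-commuting elements) is not needed and, as organized here, does not close.

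The correct argument is much shorter and uses only the one-dimensional universal property. Because $\JJ$ is locally discrete, the family of \emph{constant} functors $W(j)\to\CC_\GG(0,D(j))$ (sending every morphism of $W(j)$ to the identity $2$-cell) is a legitimate $2$-natural transformation $W\to\CC_\GG(0,D(-))$, i.e.\ a ``trivial'' cone on $0$. Since there is only one map $0\to\lim^W D$, this trivial cone must coincide with the whiskering of the universal cone along that map. Whiskering a $2$-cell along a $1$-cell does not change the underlying element of $\GG(D(j))$, and two parallel $2$-cells into $D(j)$ are equal iff their $\GG$-elements are; hence every $2$-cell of the universal cone is trivial, and therefore so is every $2$-cell of every cone (each being a whiskering of the universal one). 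Your preliminary unwinding of what a cone is, and your reduction of the second assertion to the adjunction $\Pi_0\dashv\Delta$ (each component functor factoring through the discrete category $\Pi_0W(j)$), are both essentially right and match the paper --- though note you should not invoke Theorem~\ref{thm:positiveresults}(i) there, since representability of $\GG$ is not assumed in this theorem.
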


\begin{proof} 

Let us build a trivial cone on $0$ by defining a natural transformation $W\to \CC_\GG(0,D(-))$ whose $j$-th component for $j\in\JJ$ is given by the constant functor $W(j)\to \CC_\GG(0,D(j))$ on the unique object of $\CC_\GG(0,D(j))$. As $\JJ$ has no nontrivial $2$-cells, this is a $2$-natural transformation and hence defines a trivial cone on $0$. If $\lim^W D$ exists, this trivial cone must factor via the universal one by a map $0\to \lim^W D$. As two $2$-cells between maps into an object $A$ are equal iff the corresponding elements of $\GG(A)$ are equal, the fact that the trivial cone factors via the universal one implies that all the $2$-cells appearing in the universal cone and hence in every cone must also be trivial. 

Consequently, for every object $A$, every natural transformation $W\to \CC_\GG(A,D(-))$ factors via $\Delta\circ \CC(A,D(-))$ where $\Delta\colon \Set\to\Cat$ promotes a set into a discrete category. As $\Delta$ is the right adjoint to $\Pi_0$, $2$-natural transformations $W\to \CC_\GG(A,D(-))$ thus correspond to ordinary natural transformations  $\Pi_0\circ W\to \CC(A,D(-))$ which are $\Set$-valued functors. Consequently, the universal property satisfied by $\lim^W D$ is that of $\lim^{\Pi_0 W} D$, which in turn can be expressed as a conical limit as those are sufficient for ordinary (\ie $\Set$-enriched) categories.
\end{proof}

\begin{corollary}\label{cor:obstructionslims}[Specific obstructions to limits]  Let  $(\GG \colon \CC\to\Grp,\gamma)$ be a crossed module where $\CC$ has an initial object. 
If $\GG(A)$ is nontrivial, then the following limits cannot exist in $\CC_\GG$:
          \begin{enumerate}[label=(\roman*)]
           \item the cotensor of $A$ by $\cat{2}$ 
            \item lax limits of arrows into $A$
            \item  comma objects of maps $X\rightarrow A\leftarrow Y$
            \item inserters of two maps $X\rightrightarrows A$
            \item equifiers of distinct parallel $2$-cells between maps into $A$.
          \end{enumerate}
 \end{corollary}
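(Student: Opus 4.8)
The strategy is to realize each of the five limits (i)--(v) as a special case of the situation covered by Theorem~\ref{thm:limshavetrivial2cells}: namely, a weighted limit $\lim^W D$ whose indexing shape $\JJ$ is a locally discrete $2$-category (i.e.\ a mere $1$-category) and for which there exists a $W$-weighted cone on $D$ containing a nontrivial $2$-cell, built using the nontrivial element of $\GG(A)$. Once such a nontrivial cone is exhibited, Theorem~\ref{thm:limshavetrivial2cells} immediately forbids the limit from existing in $\CC_\GG$, since that theorem says every $W$-weighted cone must have only trivial $2$-cells when $\lim^W D$ exists.

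Concretely, I would proceed item by item. For (i), the cotensor $A^{\cat{2}}$ is the limit of the constant diagram on $A$ (indexed by the terminal category $\cat{1}$, which is locally discrete) weighted by the walking-arrow category $\cat{2}\colon\cat{1}\to\Cat$; a $\cat{2}$-weighted cone on $A$ over $B$ is exactly a $2$-cell between two maps $B\to A$, so picking any nontrivial $\alpha\in\GG(A)$ and the $2$-cell $\alpha\colon \id[A]\to\gamma_A(\alpha)$ on $A$ itself gives a nontrivial cone. For (iv), inserters of $X\rightrightarrows A$ are weighted limits over the category $\JJ$ with two parallel arrows, again locally discrete, with the inserter weight; an $\JJ$-indexed cone is a map into $A$ together with a $2$-cell between the two composites, and again a nontrivial element of $\GG(A)$ supplies a nontrivial such $2$-cell. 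For (v), equifiers of two distinct parallel $2$-cells $\phi,\psi\colon f\Rightarrow g\colon X\rightrightarrows A$ are limits over the relevant locally discrete shape; a cone is a map $h\colon B\to X$ such that $\phi h=\psi h$, and the cone carries the $2$-cell data, so one argues that a nontrivial $2$-cell can appear --- here I'd be slightly careful and instead note that the equifier limit-cone itself involves a $2$-cell ($\phi h = \psi h$ as $2$-cells into $A$), and the existence of the equifier would force it to be trivial, but we may present a cone where it is not, using that $\GG(A)\neq 0$; alternatively reduce (v) to (i) or (iv). For (ii) and (iii), lax limits of arrows and comma objects are standard weighted limits over locally discrete shapes (the lax-limit weight and the cospan shape $X\to A\leftarrow Y$ respectively), whose cones visibly contain $2$-cells with codomain-component an element of $\GG(A)$, so the same argument applies.

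The one genuine subtlety --- and the place where the write-up needs care rather than routine bookkeeping --- is checking in each case that (a) the indexing $2$-category really is locally discrete so that Theorem~\ref{thm:limshavetrivial2cells} applies, and (b) that the nontrivial $2$-cell one writes down genuinely occurs \emph{in a legitimate $W$-weighted cone}, i.e.\ that all the naturality/modification conditions for that weight are satisfied by the chosen data. For the cotensor by $\cat{2}$ this is transparent; for comma objects and lax limits of arrows it amounts to recalling the explicit form of the weight and observing that the single $2$-cell component is unconstrained except by equations that hold automatically when the other $1$-cells into $A$ are chosen to be $\id[A]$ and $\gamma_A(\alpha)$. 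I expect the main obstacle to be purely expository: presenting (ii), (iii), (v) without reproducing the full definitions of their weights, ideally by reducing them to (i) and (iv) via the standard fact that cotensors by $\cat{2}$ (equivalently inserters together with products) build these limits, so that their existence would entail the existence of the forbidden cotensor/inserter. Thus the cleanest route is: prove (i) and (iv) directly from Theorem~\ref{thm:limshavetrivial2cells}, then derive (ii), (iii), (v) by observing that each of these limits, if it existed, would let one construct a cone with a nontrivial $2$-cell into $A$ (or would directly build the cotensor $A^{\cat{2}}$), contradicting (i).
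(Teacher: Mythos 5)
Your treatment of (i)--(iv) is essentially the paper's: apply Theorem~\ref{thm:limshavetrivial2cells} after exhibiting a weighted cone containing a nontrivial $2$-cell built from a nonzero element of $\GG(A)$. One detail worth fixing: the paper takes the apex of these cones to be the initial object $0$, which is the safe choice. For instance, for an inserter of arbitrary $f,g\colon X\rightrightarrows A$, a nontrivial $2$-cell $fh\to gh$ need not exist over an arbitrary apex (it requires $\gamma(\alpha)fh=gh$), but over $h\colon 0\to X$ it always does, since then $fh=gh$ is the unique map $0\to A$ and every $\alpha\in\GG(A)$ qualifies; this is precisely why the initial-object hypothesis appears.

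The genuine gap is item (v). An equifier is \emph{not} a weighted limit over a locally discrete shape in the sense required by Theorem~\ref{thm:limshavetrivial2cells}: the two parallel $2$-cells $\phi,\psi$ must be recorded either in the indexing $2$-category (which then has nontrivial $2$-cells) or in the diagram (which is then not a diagram landing in the underlying $1$-category $\CC$), so the theorem's hypotheses fail. Moreover, a cone for an equifier is just a $1$-cell $h\colon B\to X$ satisfying the \emph{property} $\phi h=\psi h$; it carries no free $2$-cell component that one could choose to be nontrivial, so there is nothing for the theorem to forbid. Your fallback reduction also runs in the wrong direction: cotensors by $\cat{2}$ (together with equalizers) suffice to \emph{construct} equifiers, but the existence of equifiers does not entail the existence of cotensors by $\cat{2}$, so no contradiction with (i) is obtained. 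The paper instead gives a separate direct argument: right-whiskering a $2$-cell with a $1$-cell does not change the underlying element of $\GG(A)$ (with the paper's formula the horizontal composite is $\phi\cdot\GG(f)(\id)=\phi$), so the universal map $e\colon E\to X$ of the equifier would satisfy $\phi e=\psi e$ and hence force $\phi=\psi$ in $\GG(A)$, contradicting distinctness. (The paper phrases this as a contrapositive, using the endo-$2$-cells on $0\to A$ determined by two elements $\alpha,\beta\in\GG(A)$.) You should adopt some such direct argument for (v).
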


\begin{proof}

(i)-(iv) follow from Theorem~\ref{thm:limshavetrivial2cells} by using any non-zero element of $\GG(A)$ to exhibit a cone with a nontrivial $2$-cell. For instance, to rule out the comma object of 
\[f\colon X\longrightarrow A\longleftarrow Y\colon g,\] 
pick a non-zero $\alpha\in\GG(A)$ and consider the diagram
      \[\begin{tikzpicture}
    \matrix (m) [matrix of math nodes,row sep=2em,column sep=4em,minimum width=2em]
    {
     0 & Y\\
     X& A \\};
    \path[->]
    (m-1-1) edge node [left] {} coordinate[midway] (a) (m-2-1)
           edge node [above] {} (m-1-2)
    (m-1-2) edge node [right] {$g$} (m-2-2)
    (m-2-1) edge node [below] {$f$} coordinate[midway] (b) (m-2-2);
    \draw[shorttwocell](m-2-1) to node [above,sloped] {$\alpha$} (m-1-2);
  \end{tikzpicture}\]
displaying a cone with a nontrivial $2$-cell.
For (v), we prove the contrapositive.  Consider any two $\alpha,\beta\in \GG(A)$ and the corresponding endo-$2$-cells on $0\to A$  and take their equifier $E\to 0$. As two $2$-cells are equal iff the corresponding elements of $\GG(A)$ are equal, the existence of this equifier implies that $\alpha=\beta$.
\end{proof}

Again, our negative results on limits hold even if one worked with bicategorical limits.

While our prior theory applies generally to all crossed modules, we record one specific result concerning isotropy. We rely on the characterization from~\cite{hofstraetal:isotropyofalgtheories} which we'll recall next up to a suitable precision. Let $\CC$ be the category of models of a single-sorted algebraic theory. Given an object $M$ of $\CC$, we write  $M\adjoin{x}$ for the coproduct of $M$ with the free model on one generator, and more generally $M\adjoin{x_1,\dots x_k}$ for the coproduct of $M$ with the free model on $k$ generators. As the notation suggests, we think of  $M\adjoin{x}$ as the result of adjoining an indeterminate $x$ to $\CC$ freely, so that elements of $M\adjoin{x}$ correspond to terms built from $M$ with one free variable, or rather, to equivalence classes of such terms modulo the theory. Given two terms $t(x),s(x)\in M\adjoin{x}$, the process $(t,s)\mapsto t[s/x]=t(s(x))$ of substituting $s$ for $x$ in $t$ results in a monoid structure on the (underlying set of) $M\adjoin{x}$, which we call the substitution monoid.

The neutral element of the substitution monoid is given by the term $x$ (or rather, its equivalence class modulo the equations of the theory), and a term is invertible if it is invertible in the substitution monoid. A term $t\in M\adjoin{x}$ is said to commute generically with the $k$-ary operation $f$ if the terms $t[f(x_1,\dots x_k)/x]$ and $f(t[x_1/x],\dots t[x_k/x])$ are equal as elements of $M\adjoin{x_1,\dots ,x_k}$.

Now, the main result of \cite{hofstraetal:isotropyofalgtheories} states\footnote{To be precise,\cite{hofstraetal:isotropyofalgtheories} only states its result for the category of finitely presentable models: however, the same characterization holds for all models~\cite[Corollary 2.3.3]{parker:thesis}} that the isotropy group of an object $M$ of an algebraic theory is naturally isomorphic with the group of invertible elements of the substitution monoid $M\adjoin{x}$ that commute generically with all function symbols of the theory.

\begin{theorem} If $\CC$ is the category of models of an algebraic theory with a finite signature, then $\ZZ$ preserves filtered colimits. Thus if $\ZZ$ is representable, the representing object is finitely presentable.
\end{theorem}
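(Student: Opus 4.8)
The plan is to show that $\ZZ$ preserves filtered colimits by using the explicit description of isotropy for algebraic theories recalled just above, together with the standard fact that filtered colimits in a category of models of a finite-signature algebraic theory are computed at the level of underlying sets. Concretely, given a filtered diagram $D\colon \JJ\to\CC$ with colimit $M=\colim_j D(j)$, I would first observe that the functor $M\mapsto M\adjoin{x}$ preserves filtered colimits, since $M\adjoin{x}$ is a coproduct of $M$ with a fixed object and filtered colimits commute with finite colimits in a locally finitely presentable category; more generally $M\mapsto M\adjoin{x_1,\dots,x_k}$ preserves filtered colimits for each fixed $k$. Hence a term $t\in M\adjoin{x}$ is, up to the colimit identifications, a term $t_j\in D(j)\adjoin{x}$ for some $j\in\JJ$, and two terms coming from $D(j)$ become equal in $M\adjoin{x}$ iff they already become equal in $D(k)\adjoin{x}$ for some $k$ receiving a map from $j$.

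The key steps are then: (1) Recall from \cite{hofstraetal:isotropyofalgtheories} (with the all-models version from \cite[Corollary 2.3.3]{parker:thesis}) that $\ZZ(M)$ is the group of invertible elements of the substitution monoid $M\adjoin{x}$ that commute generically with all function symbols. (2) Show that this group is built from finitary data: invertibility of $t$ is witnessed by a single $s\in M\adjoin{x}$ with $t[s/x]=x=s[t/x]$, both equations living in $M\adjoin{x}$; and since the signature is \emph{finite}, generic commutativity is the conjunction of \emph{finitely many} equations $t[f(x_1,\dots,x_k)/x]=f(t[x_1/x],\dots,t[x_k/x])$, each living in some fixed $M\adjoin{x_1,\dots,x_k}$. (3) Since all the objects $M\adjoin{x},M\adjoin{x_1,\dots,x_k}$ arising here are filtered colimits of the corresponding objects for the $D(j)$, a finite conjunction of equations that holds in the colimit already holds in some stage $D(k)$; conversely any such element of $\ZZ(D(j))$ maps to an element of $\ZZ(M)$ via $\ZZ$ applied to the colimit coprojection. (4) Conclude that the canonical comparison $\colim_j \ZZ(D(j))\to \ZZ(M)$ is a bijection (surjectivity from step 3, injectivity because two elements of $\ZZ(D(j)),\ZZ(D(j'))$ with the same image are already identified at some later stage, again since equality in $M\adjoin{x}$ is detected finitely), and that it is a group homomorphism because the substitution-monoid structure is transported along the filtered-colimit-preserving functor $(-)\adjoin{x}$. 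The final sentence of the theorem is then immediate: if $\ZZ\colon\CC\to\Grp$ is representable, say by $R$ with $\CC(R,-)\cong\UU\circ\ZZ$ after composing with the forgetful functor, then $\CC(R,-)$ preserves filtered colimits, i.e.\ $R$ is finitely presentable.

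The main obstacle I anticipate is bookkeeping rather than conceptual: one must be careful that the \emph{finiteness} of the signature is used exactly once and in the right place — namely to ensure that ``commutes generically with all function symbols'' is a finitary condition, so that it is reflected by a filtered colimit; without finiteness this condition is a possibly infinite conjunction and need not be detected at any single stage. A secondary point requiring care is checking that the comparison map is compatible with the \emph{group} structure (not merely the underlying sets), which amounts to noting that multiplication in $\ZZ(M)$ is substitution in $M\adjoin{x}$ and that this operation, being defined by the finitely-many-generators functor $(-)\adjoin{x}$ and substitution maps between such, commutes with filtered colimits; and that inverses, being uniquely determined, are automatically respected. Everything else is a routine application of the interchange of filtered colimits with finite limits in a locally finitely presentable category.
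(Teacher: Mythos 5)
Your proposal is correct and follows essentially the same route as the paper's proof: both use the substitution-monoid characterization of $\ZZ$, show that $(-)\adjoin{x}$ and the passage to invertible elements preserve filtered colimits, and use the finiteness of the signature precisely so that generic commutativity with all operations is a finitary condition detected at some stage of the filtered diagram. The only difference is packaging: the paper factors $\ZZ\cong I\circ P$ with $P\hookrightarrow S\colon\CC\to\cat{Mon}$ and checks each functor separately (handling invertibility via $I\cong\cat{Mon}(\mathbb{Z},-)$), whereas you argue element-wise, but the underlying mathematics is identical.
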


\begin{proof}
Recall first  that a $M$ model of an algebraic theory is finitely presentable iff $\hom(M,-)$ preserves filtered colimits, see e.g.~\cite[Proposition 13.26]{adamekrosickyvitale:algebraictheories}. This takes care of the second claim, and will also be used for the first claim. 

We prove the first claim by analyzing three functors: (i) the functor $S\colon \CC\to\cat{Mon}$ sending  a model $M$ to   the substitution monoid $M\adjoin{x}$, (ii) the subfunctor $P$ of it sending a $M$ the elements of  $M\adjoin{x}$ that commute generically with all function symbols of the theory, and (iii) the functor $I\colon \cat{Mon}\to\Grp$ sending a monoid to its group of invertible elements. 

Now, the aforementioned characterization states that $\ZZ_\CC$ is isomorphic to $IP$, so it suffices to prove that this composite preserves filtered colimits. Moreover, $I$ is isomorphic to $\cat{Mon}(\mathbb{Z},-)$ and $\mathbb{Z}$ is a finitely presented monoid, so that $I$ preserves filtered colimits. 

We  now show that $S$ preserves filtered colimits. Indeed, let $D\colon \JJ\to \CC$ be a filtered diagram, and consider the canonical comparison $\colim SD\to S(\colim D)$ map in $\cat{Mon}$. To prove that it is an isomorphism in $\cat{Mon}$, it suffices to prove that the underlying function is an isomorphism in $\Set$. As the forgetful functor $U\colon \cat{Mon}\to\Set$  to set is represented by $\mathbb{N}$ which is finitely presentable, it suffices to prove that the canonical comparison $\colim USD\to US(\colim D)$ is an isomorphism. In turn, as $US(M)\cong M\adjoin{x}$ naturally in $M$ it is sufficient to show that the endofunctor on $\CC$ defined by $M\mapsto\adjoin{x}$ preserves filtered colimits, but this is true as colimits commute with colimits.

We conclude by showing that $P\hookrightarrow S$ preserves filtered colimits. Now, the canonical comparison map $\colim PD\to P\colim D$  fits into a commutative diagram  
  \[\begin{tikzpicture}
    \matrix (m) [matrix of math nodes,row sep=2em,column sep=4em,minimum width=2em]
    {
     \colim PD& P\colim D\\
     \colim SD& S\colim D\\};
    \path[->]
    (m-1-1) edge node [left] {} coordinate[midway] (a) (m-2-1)
           edge node [above] {} (m-1-2)
    (m-1-2) edge node [right] {} (m-2-2)
    (m-2-1) edge node [below] {$\cong $} coordinate[midway] (b) (m-2-2);
  \end{tikzpicture}\]
and hence it is monic and thus injective. To show that it is surjective, we invert the isomorphism above and consider the commuting triangle
    \[\begin{tikzpicture}
    \node (x) at (-1,1) {$\colim PD$};
    \node (a) at (3,1) {$P\colim D$};
    \node (b) at (1,-1) {$\colim SD$};
    \draw[->] (x) to node[left] {} (a);
    \draw[->] (x) to node[right] {} (b);
    \draw[->] (a) to node[below] {} (b);
    \end{tikzpicture}\]
in order to rely on the concrete description of filtered colimits as quotients of the disjoint product. Then elements of $\colim SD$ are equivalence classes of terms that appear at some stage $SD(j)$: elements of $\colim PD\hookrightarrow \colim SD$ then are given by terms that commute generically with all function symbols of the theory already at some stage $SD(j)$, whereas $\colim PD\hookrightarrow \colim SD$ consists of those terms that commute generically with all function symbols of the theory at the colimit stage \ie after passing to equivalence classes.  Now, if such a term commutes generically with a given operation of the theory, it has to do so at some stage $j\in \JJ$. As our theory can be given by finitely many operations, a term commutes generically with all operations of the theory iff it commutes generically with this finite set of them. As the diagram is filtered, we may then first find for each of these finitely many operations a stage $j_i\in \JJ$  at which the term commutes with the $i$-th operation, and then find a stage $j\in \JJ$  above all of these. This shows that a term in $\colim PD\to \colim SD$ already has to commute generically with all operations of the theory at some stage $j\in \JJ$, \ie  that it is already in $\colim PD\hookrightarrow \colim SD$, as desired.
\end{proof}

Informally, the representing object can be then thought of as a ``a model freely generated with an inner automorphism''. Finding an explicit finite presentation for this object can be insightful, as it would give a ``universal term'' representing inner automorphisms in that theory just like the term $g x g^{-1}$ does for groups. We leave open the question of finding such an explicit finite presentation. Another potentially interesting further avenue is the following: in some cases, instead of a  ``universal term'' there seems to be a family of terms that capture all possible types of an inner automorphism (for instance, the results of~\cite{hofstrakarvonen:2d-isotropy} seem to suggest that this is the for racks and quandles). Do such situations correspond to the case where $\ZZ$ is familially representable? If so, is there a version of Theorem~\ref{thm:representability} capturing familial representability of $\ZZ$ in terms of the $2$-category $\CC_\ZZ$?  

Many of the results in this section, most notably Theorems~\ref{thm:connectedcolims} and~\ref{thm:positiveresults}(i) do not invoke inverses in the groups $\GG$ at all, and hence we expect that these results would go through even if $\GG$ was a copresheaf of monoids. Moreover, it seems that many of the other results that do invoke inverses and in particular the Peiffer identity could be written in an equivalent form not invoking inverses by systematically replacing equations of the form $gxg^{-1}=y$ with the equivalent $gx=yg$. However, achieving such additional generality would come at a cost as we would have had to first develop a monoid-version of crossed modules instead of referring to results from~\cite[Section 5]{funketal:crossedtoposes} in Section 2, and for relatively little gain: for us, the main crossed modules of interest come from isotropy, and the passage from isotropy groups to isotropy monoids has been studied relatively little as it seems that (extended) inner endomorphisms are less well behaved and less interesting than inner automorphisms, see \eg~\cite{bergman:inner}.

\section{Extending functors to $2$-functors}

While $\ZZ$ can be used to promote any locally small category (with small isotropy) into a (locally small) $2$-category $\CC_\ZZ$, this construction is not functorial in $\CC$. As the functor discussed in Theorem~\ref{thm:xmodto2cat} is full and faithful on 1 and $2$-cells, we can view this theorem as characterizing exactly the further data required to make an ordinary functor $\CC\to\DD$ into a $2$-functor $\CC_\ZZ\to\DD_\ZZ$, and similarly for natural transformations between them. 

While not directly related to the earlier results on (co)limits in $2$-categories of the form $\CC_\GG$, this section is also motivated by understanding such $2$-categories, this time in terms of functors between them. This section arose from studying in further detail when an ordinary functor $\CC\to\DD$ can be promoted into a  $2$-functor $\CC_\ZZ\to\DD_\ZZ$, whether uniquely or at least canonically. The first two results of this section stem from the observation that such a canonical extension always exists for the functor $\ZZ\colon \CC\to\Grp$ and its left adjoint if it exists. However, this holds in fact more generally: for any crossed module we have a canonical extension of $\GG\colon \CC\to\Grp$ to a $2$-functor $\CC_\GG\to \Grp_\ZZ$. Moreover, when $\GG$ has a left adjoint, the adjunction extends canonically to a 2-adjunction. We then conclude by exhibiting a class of functors that is closed under composition and for which the extension from $\CC\to\DD$ to $\CC_\ZZ\to\DD_\ZZ$ is unique. 

\begin{theorem}\label{thm:G_is_a_2functor} For any crossed module $(\GG \colon \CC\to\Grp,\gamma)$, the functor
$\GG$ extends canonically to a $2$-functor $\CC_\GG\to\Grp_\ZZ$. 
\end{theorem}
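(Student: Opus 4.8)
The plan is to invoke Theorem~\ref{thm:xmodto2cat}. Since the $2$-functor from crossed modules to $2$-categories appearing there is full and faithful on $1$-cells, a $2$-functor $\CC_\GG\to\Grp_\ZZ$ restricting to $\GG$ on underlying $1$-categories is the same data as a morphism of crossed modules from $(\GG,\gamma)$ to the isotropy crossed module $(\ZZ\colon\Grp\to\Grp,\delta)$ on $\Grp$ whose underlying functor is $\GG$ itself. (Recall that $\Grp_\ZZ$ is by definition the $2$-category induced by this latter crossed module, and that $\Grp$, being locally presentable, has small isotropy.) So it suffices to produce such a morphism of crossed modules.

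Following Definition~\ref{def:crossedmod}, I take the underlying functor to be $\GG\colon\CC\to\Grp$ and must supply a natural transformation $\sigma\colon\GG\to\ZZ\circ\GG$ making the relevant square commute. I use the canonical natural isomorphism $\kappa\colon\mathrm{id}_{\Grp}\xrightarrow{\ \cong\ }\ZZ$ which sends $g\in G$ to the natural automorphism of $P_G\colon G/\Grp\to\Grp$ whose component at $f\colon G\to H$ is conjugation by $f(g)$, and set $\sigma_A:=\kappa_{\GG(A)}\colon\GG(A)\to\ZZ(\GG A)$. Naturality of $\sigma$ in $A$ is immediate from naturality of $\kappa$.

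The one point of real content is the commutativity of the square of Definition~\ref{def:crossedmod}, namely that $\delta_{\GG A}\circ\sigma_A$ equals $\GG(-)\circ\gamma_A$ as maps $\GG(A)\to\Aut(\GG A)$, where $\GG(-)$ here denotes the action of $\GG$ on automorphisms. Unwinding the definitions, $\delta_{\GG A}\circ\kappa_{\GG A}$ sends $\alpha\in\GG(A)$ to conjugation by $\alpha$ in the group $\GG(A)$, whereas the other composite sends $\alpha$ to $\GG(\gamma_A(\alpha))\in\Aut(\GG A)$; these agree precisely by the Peiffer identity $\GG(\gamma_A(\alpha))\beta=\alpha\beta\alpha^{-1}$. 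This is the only place the Peiffer identity enters. Having produced the morphism of crossed modules, Theorem~\ref{thm:xmodto2cat} yields the desired $2$-functor $\CC_\GG\to\Grp_\ZZ$, which by construction agrees with $\GG$ on underlying $1$-categories; canonicity is then clear, as no arbitrary choices were made ($\kappa$ is canonical) and the assignment of Theorem~\ref{thm:xmodto2cat} from crossed-module morphisms to $2$-functors is itself functorial.

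Alternatively, one could bypass Theorem~\ref{thm:xmodto2cat} and define the $2$-functor by hand: $A\mapsto\GG(A)$ on objects, $f\mapsto\GG(f)$ on $1$-cells, and a $2$-cell $\alpha\colon f\to g$ in $\CC_\GG$ (an element $\alpha\in\GG(B)$ with $\gamma_B(\alpha)f=g$) to the $2$-cell $\GG(f)\to\GG(g)$ in $\Grp_\ZZ$ named by the same element $\alpha\in\GG(B)\cong\ZZ(\GG B)$. Checking that this is well defined and strictly functorial --- it respects identities, vertical composition (multiplication in $\GG(B)$ on both sides) and horizontal composition via the formula $\beta*\alpha=\beta\cdot\GG(f_2)(\alpha)$ --- again reduces to the Peiffer identity together with routine manipulation of these formulas. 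Either way, I expect the only (mild) obstacle to be recognizing that the Peiffer identity is exactly what is needed to make the comparison square commute; beyond that the argument is bookkeeping.
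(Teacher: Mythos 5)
Your proposal is correct and follows essentially the same route as the paper: the paper also constructs a morphism of crossed modules $(\GG,\gamma)\to(\ZZ_{\Grp},\delta)$ with $\sigma$ given by the canonical isomorphism $\id[\Grp]\cong\ZZ_{\Grp}$ whiskered with $\GG$, verifies the comparison square via the Peiffer identity, and then invokes Theorem~\ref{thm:xmodto2cat}. Your identification of the Peiffer identity as the sole point of substance matches the paper exactly.
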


\begin{proof} The canonical isomorphism $\id[\Grp]\cong \ZZ_\Grp$ gives rise to an isomorphism $\sigma\colon\GG\to\ZZ\GG$. For an arbitrary $A\in\CC$, consider the square 
  \[\begin{tikzpicture}
    \matrix (m) [matrix of math nodes,row sep=2em,column sep=4em,minimum width=2em]
    {
     \GG(A) & \ZZ \GG(A) \\
      \Aut(A) & \Aut(\GG(A))  \\};
    \path[->]
    (m-1-1) edge node [left] {$\gamma$} (m-2-1)
           edge node [above] {$\sigma$} (m-1-2)
    (m-1-2) edge node [right] {$\delta$} (m-2-2)
    (m-2-1) edge node [below] {$\ZZ_\Grp$}  (m-2-2);
  \end{tikzpicture}\]
 The path via the top right corner sends $\alpha\in\GG(X)$ to the inner automorphism $\alpha\circ -\circ\alpha^{-1}$, whereas the path via the bottom left corner sends $\alpha$ to $\ZZ(\gamma(\alpha))$. These two agree by the Peiffer identity, giving us a morphism of crossed modules and hence a $2$-functor by Theorem~\ref{thm:xmodto2cat}. 
\end{proof}

\begin{example}
In general, this functor is not full on $2$-cells: for instance, in $\cat{Ab}_\ZZ$ there is a unique $2$-cell between $f$ and $-f$, and no other non-identity $2$-cells. However, the functor $\ZZ_{\cat{Ab}}$ is constant at $\mathbb{Z}_2$, so that between $\ZZ_{\cat{Ab}}(f)$ and $\ZZ_{\cat{Ab}}(g)$ there are always two $2$-cells for any parallel $f$ and $g$. 
\end{example}

\begin{theorem}\label{thm:2-adjoints} Let $(\GG \colon \CC\to\Grp,\gamma)$ be a crossed module for which the functor
$\GG$  has a left adjoint $L$. Then the adjunction $L\dashv \GG$ extends canonically to a strict 2-adjunction. In particular, for this extension $\GG$ preserves all two-dimensional limits that exist in $\CC$ and $L$ preserves all two-dimensional colimits in $\Grp$.
\end{theorem}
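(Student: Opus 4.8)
The plan is to build the 2-adjunction from the 1-adjunction by promoting all the relevant data --- the functors, the unit, and the counit --- to the 2-categorical setting using the machinery of Theorem~\ref{thm:xmodto2cat}, and then to observe that the triangle identities, being equations between 1-cells, transport automatically. First I would use Theorem~\ref{thm:G_is_a_2functor} to get the 2-functor $\GG\colon\CC_\GG\to\Grp_\ZZ$. For the left adjoint, I would show that $L\colon\Grp\to\CC$ underlies a morphism of crossed modules $(\Grp,\ZZ_\Grp)\to(\CC,\GG)$, hence extends to a 2-functor $L\colon\Grp_\ZZ\to\CC_\GG$ by Theorem~\ref{thm:xmodto2cat}. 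The required natural transformation $\ZZ_\Grp\to\GG L$ is the mate, under the adjunction $L\dashv\GG$ and the isomorphism $\id[\Grp]\cong\ZZ_\Grp$, of the map obtained from the unit; concretely, since $\CC$ has finite coproducts when $L$ exists (as $L(\mathbb{Z})$ represents $\UU\GG$, and more generally $L$ sends the free group on a set to a coproduct of copies of the representing object), one can describe $L$ on a group $G$ and check directly that conjugation in $G$ maps into $\GG L(G)$ compatibly with the $\gamma$-action. I would verify the square in Definition~\ref{def:crossedmod} commutes --- this again reduces to the Peiffer identity, exactly as in the proof of Theorem~\ref{thm:G_is_a_2functor}, since the action of an inner automorphism of $G$ transports along $L$ to the action of its image in $\GG L(G)$.

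Next I would promote the unit $\eta\colon\id[\Grp]\to\GG L$ and counit $\varepsilon\colon L\GG\to\id[\CC]$ to 2-cells in the 2-category of crossed modules, i.e. to 2-natural transformations between the corresponding 2-functors. By Definition~\ref{def:crossedmod}, a 2-cell between morphisms of crossed modules is just a natural transformation $\tau$ of the underlying functors satisfying the compatibility $\theta=(\HH\tau)\circ\sigma$ with the structure transformations; so the only thing to check is that $\eta$ and $\varepsilon$ satisfy this one equation, which is a direct diagram chase using the definitions of the structure transformations just constructed. Since Theorem~\ref{thm:xmodto2cat} says the assignment $(\GG,\gamma)\mapsto\CC_\GG$ is a 2-functor full and faithful on 1- and 2-cells, these data descend to genuine 2-natural transformations $\eta\colon\id[\Grp_\ZZ]\to\GG L$ and $\varepsilon\colon L\GG\to\id[\CC_\GG]$ between the promoted 2-functors.

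It then remains to check the two triangle identities for the promoted data. But each triangle identity is an equation between 2-natural transformations whose underlying 1-categorical transformations are precisely the triangle identities of $L\dashv\GG$, which hold by hypothesis; since the 2-functor of Theorem~\ref{thm:xmodto2cat} is faithful on 2-cells, equality downstairs forces equality upstairs. This yields a strict 2-adjunction $L\dashv\GG$ between $\Grp_\ZZ$ and $\CC_\GG$. The final sentence of the statement is then the standard fact that any right 2-adjoint preserves all weighted 2-limits and any left 2-adjoint preserves all weighted 2-colimits; combined with Theorem~\ref{thm:positiveresults} this is especially meaningful, as it says $L$ preserves whatever 2-colimits happen to exist in $\Grp_\ZZ$ --- for instance coinserters, recovering the fact that $L$ of an HNN-extension is computed as an HNN-extension-style 2-colimit in $\CC_\GG$.

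I expect the main obstacle to be pinning down the structure transformation $\ZZ_\Grp\to\GG L$ making $L$ a morphism of crossed modules and verifying its compatibility square: unlike the case of $\GG$ itself, where $\sigma$ was simply the canonical isomorphism $\GG\cong\ZZ\GG$, here one must genuinely use the adjunction and the explicit behaviour of $L$ on free groups (equivalently, the cogroup structure on the representing object) to write down the map and see that the Peiffer identity makes the square commute. Once that morphism of crossed modules is in hand, everything else is a formal consequence of the 2-functoriality and faithfulness in Theorem~\ref{thm:xmodto2cat}.
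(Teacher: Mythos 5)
Your proposal follows essentially the same route as the paper: extend $\GG$ via Theorem~\ref{thm:G_is_a_2functor}, make $L$ into a morphism of crossed modules by taking the composite of the isomorphism $\ZZ_\Grp\cong\id[\Grp]$ with the unit as the structure transformation $\ZZ_\Grp\to\GG L$ (with the compatibility square verified by transposing along the adjunction and invoking naturality plus the Peiffer identity), and then check that the unit and counit are $2$-cells of crossed modules so that everything descends through Theorem~\ref{thm:xmodto2cat}. The one quibble is your aside that $\CC$ has finite coproducts whenever $L$ exists and that the verification needs the explicit action of $L$ on free groups --- this premise is not justified and the detour is unnecessary, since your abstract description of the structure map as the mate of the unit is already the one the paper uses and suffices for the Peiffer-identity computation.
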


\begin{proof}
$\GG$ extends canonically to a $2$-functor by Theorem~\ref{thm:G_is_a_2functor}. Let us show that the left adjoint $L$ also extends canonically to a $2$-functor. Composing the canonical isomorphism $\tau\colon\ZZ\cong\id[\Grp]$ with the unit $\eta\colon \id[\Grp]\to \GG \circ L$ gives a natural transformation $\ZZ\to \GG\circ L$. For a group $H$, consider the square
  \[\begin{tikzpicture}
    \matrix (m) [matrix of math nodes,row sep=2em,column sep=4em,minimum width=2em]
    {
     \ZZ (H)\cong H & \GG L(H) \\
      \Aut(H) & \Aut(LH)  \\};
    \path[->]
    (m-1-1) edge node [left] {$\delta$} (m-2-1)
           edge node [above] {$\eta$} (m-1-2)
    (m-1-2) edge node [right] {$\gamma $} (m-2-2)
    (m-2-1) edge node [below] {$L$}  (m-2-2);
  \end{tikzpicture}\]
Going via the bottom left corner sends $\alpha\in\ZZ (H)$ to $L(\alpha_{\id} )\colon LH\to LH$ and going via the top right corner sends $\alpha$ to $\gamma_{LH}\eta_H\tau_H(\alpha)\colon LH\to LH$.  By adjointness, it suffices to compare the corresponding morphisms $H\to \GG L(H)$. Saying that these are equal amounts to claiming that boundary of
\[\begin{tikzpicture}[yscale=1.75,xscale=1.75]
\node (a) at (-3,0) {$H$};
\node (b0) at (0,1.5) {$\GG L(H)$};
\node (b1) at (0,.5) {$H$};
\node (b2) at (0,-.5) {$\GG L(H)$};
\node (b3) at (0,-1.5) {$\GG L(H)$};
\node (c) at (3,0) {$\GG L(H)$};
\draw[->,out=60,in=180] (a) to node[above] {$\eta_H$} (b0);
\draw[->] (a) to node[above] {$\alpha_{\id}$} (b1);
\draw[->] (a) to node[above] {$\eta_H$} (b2);
\draw[->,out=-60,in=180] (a) to node[below] {$\eta_H$} (b3);
\draw[->,out=0,in=120] (b0) to node[right] {\quad$\GG L(\alpha_{\id})$} (c);
\draw[->] (b1) to node[above] {$\eta_H$} (c);
\draw[->] (b2) to node[above] {$\alpha_{\eta_H}$} (c);
\draw[->,out=0,in=-120] (b3) to node[below] {$\qquad\qquad\qquad\GG(\gamma (\eta_H\tau_H(\alpha)))$} (c);
\draw[gray,draw=none] (b0) to node{(i)} (b1);
\draw[gray,draw=none] (b1) to node{(ii)} (b2);
\draw[gray,draw=none] (b2) to node{(iii)} (b3);
\end{tikzpicture}\]
commutes, which will follow once we show that the regions inside of it commute. Region (i) commutes by naturality of $\eta$ and (ii) by naturality of $\alpha$. Let us consider region (iii): the top path first sends $h\in H$ to $\eta_H(h)$ and then conjugates it with $\eta_H\tau_H(\alpha)$, resulting in $(\eta_H\tau_H(\alpha))\eta_H(g)(\eta_H\tau_H(\alpha))^{-1}$, whereas the bottom path sends $h$ to $\GG(\gamma(\eta_H\tau_H(\alpha)))\eta_H(h)$. These two are equal by the Peiffer identity, so that we can promote $L$ into a morphism of crossed modules and hence to a $2$-functor by Theorem~\ref{thm:xmodto2cat}. 

It remains to check that the unit and counit are $2$-natural, which by Theorem~\ref{thm:xmodto2cat} amounts to checking that they are $2$-cells between maps of crossed modules. The $2$-naturality of the unit $\eta\colon \id[\Grp]\to\GG L$ follows from the commutativity of
  \[\begin{tikzpicture}[yscale=1,xscale=1.75]
  \node (a) at (-2,2) {$\ZZ$};
  \node (b) at (2,2) {$\ZZ$};
  \node (c) at (2,-2) {$\ZZ\GG L$};
  \node (a1) at (0,0) {$\ZZ\GG L$};
  \node (a2) at (-2,-2) {$\id[\Grp]$};
  \node (a3) at (0,-2) {$\GG L$};
  \draw[->] (a) to node[above] {$\id$} (b);
  \draw[->] (a) to node[above] {$\ZZ\eta$} (a1);
  \draw[->] (a) to node[left] {$\tau$} (a2);
  \draw[->] (a1) to node[left] {$\tau$} (a3);
  \draw[->] (a1) to node[right] {$\id$} (c);
  \draw[->] (a2) to node[above] {$\eta$} (a3);
  \draw[->] (a3) to node[above] {$\tau^{-1}$} (c);
  \draw[->] (b) to node[right] {$\ZZ \eta $} (c);
  \end{tikzpicture}\]
and the counit $\epsilon\colon L\GG\to \id[\CC]$  is $2$-natural since
  \[\begin{tikzpicture}[yscale=1,xscale=1.75]
  \node (a) at (-2,2) {$\GG$};
  \node (a1) at (0,2) {$\ZZ\GG$};
  \node (a2) at (2,2) {$\GG$};
  \node (b) at (2,0) {$\GG L\GG$};
  \node (c) at (2,-2) {$\GG$};
  \draw[->] (a) to node[below] {$\id$} (c);
  \draw[->] (a) to node[above] {$\tau^{-1}$} (a1);
 \draw[->] (a1) to node[above] {$\tau$} (a2);
   \draw[->] (a2) to node[right] {$\eta\GG$} (b);
  \draw[->] (b) to node[right] {$\GG \epsilon$} (c);
  \end{tikzpicture}\]
commutes.
\end{proof}

We conclude by observing that, under certain conditions, there is at most one extension of $F\colon\CC\to\DD$ to a $2$-functor $\CC_\ZZ\to\DD_\ZZ$.

\begin{proposition}
Let $F\colon\CC\to\DD$ be a left adjoint with a faithful right adjoint $G$. Then there is at most one extension of $F$ to a $2$-functor $\CC_\ZZ\to\DD_\ZZ$.

Moreover, such an extension exists iff $F$ induces a functor of isotropy quotients, \ie if whenever there is a $2$-cell $f\to g$, there is a $2$-cell $F(f)\to F(g)$. 
\end{proposition}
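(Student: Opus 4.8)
The plan is to exploit Theorem~\ref{thm:xmodto2cat}: since the $2$-functor sending a crossed module to its induced $2$-category is full and faithful on both $1$- and $2$-cells, extending $F\colon\CC\to\DD$ to a $2$-functor $\CC_\ZZ\to\DD_\ZZ$ is the same as equipping $F$ with the structure of a morphism of crossed modules $(\ZZ_\CC,\delta^\CC)\to(\ZZ_\DD,\delta^\DD)$, \ie with a natural transformation $\sigma\colon\ZZ_\CC\to\ZZ_\DD\circ F$ making the evident square with the $\delta$'s and $\Aut(F-)$ commute. So the proposition amounts to showing: (a) such a $\sigma$, if it exists, is unique; and (b) it exists precisely when $F$ respects the isotropy-quotient relation. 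The adjunction $F\dashv G$ with $G$ faithful is what forces uniqueness.

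For uniqueness (a), I would argue as follows. Fix $X\in\CC$. An element $\alpha\in\ZZ_\CC(X)$ is a natural automorphism of $X/\CC\to\CC$, and I want to pin down $\sigma_X(\alpha)\in\ZZ_\DD(FX)$ by its components. Given any $h\colon FX\to B$ in $\DD$, by adjointness $h$ corresponds to $\bar h\colon X\to GB$, and one can consider $\alpha_{\bar h}\colon GB\to GB$. The compatibility requirement with the $\delta$'s (which records that $\alpha_{\mathrm{id}}$ acts as the automorphism $\gamma(\alpha)$, and naturality of the would-be $\sigma$ in $\CC$) should force $\sigma_X(\alpha)_h$ to equal the mate/transpose of $F(\alpha_{\mathrm{id}_{GB}})$ composed appropriately — concretely, $\sigma_X(\alpha)_{\mathrm{id}_{FX}}$ is determined as the unique automorphism $u$ of $FX$ with $G(u)$ fitting the naturality square for $\alpha$ against the unit $\eta_X\colon X\to GFX$, and faithfulness of $G$ makes ``unique such $u$'' literally unique. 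Then the cocone/naturality conditions defining an element of $\ZZ_\DD(FX)$ propagate this to all components. Hence at most one $\sigma$, hence at most one $2$-functor extension.

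For existence (b), the ``only if'' direction is immediate: a $2$-functor sends a $2$-cell $f\to g$ to a $2$-cell $F(f)\to F(g)$, and having a $2$-cell between two parallel maps is exactly the isotropy-quotient relation, so $F$ must identify $f$ and $g$ in the quotient whenever $\ZZ_\CC$ does. For the ``if'' direction, suppose $F$ descends to a functor on isotropy quotients. I would define $\sigma_X\colon\ZZ_\CC(X)\to\ZZ_\DD(FX)$ using the recipe forced above and check it lands in $\ZZ_\DD(FX)$, is a group homomorphism, is natural in $X$, and satisfies the $\delta$-compatibility square. The hypothesis ``whenever there is a $2$-cell $f\to g$ in $\CC_\ZZ$ there is one $F(f)\to F(g)$ in $\DD_\ZZ$'' is precisely what is needed to guarantee that $F(\gamma^\CC(\alpha)\circ f)$ is again of the form $\gamma^\DD(\text{something})\circ F(f)$, \ie that the putative components $\sigma_X(\alpha)_h$ exist as genuine automorphisms in the isotropy group rather than merely as automorphisms of the object; faithfulness of $G$ again secures their uniqueness and hence the naturality bookkeeping. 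Assembling these gives a morphism of crossed modules, and Theorem~\ref{thm:xmodto2cat} converts it into the desired $2$-functor.

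I expect the main obstacle to be the existence half of (b): one must show that the isotropy-quotient hypothesis, which a priori only says ``some $2$-cell exists'', actually produces a \emph{coherent, natural} family of elements $\sigma_X(\alpha)\in\ZZ_\DD(FX)$ — that is, one has to upgrade a ``mere existence'' statement about $2$-cells into honest structure. This is exactly where the left-adjoint-with-faithful-right-adjoint hypothesis does the work: faithfulness of $G$ turns ``there exists a $2$-cell'' into ``there exists a \emph{unique} automorphism implementing it'', removing all choice and thereby making naturality and functoriality automatic. The verification that the resulting $\sigma$ is a homomorphism and respects $\delta$ should then be a routine diagram chase using the Peiffer identity and the triangle identities of the adjunction, and I would present it as such rather than spelling out every square.
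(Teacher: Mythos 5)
Your overall strategy matches the paper's: reduce via Theorem~\ref{thm:xmodto2cat} to the question of whether $F$ carries a (necessarily unique) crossed-module structure $\sigma\colon\ZZ_\CC\to\ZZ_\DD\circ F$, observe that the constraints force $\sigma_A(\alpha)_{Ff}=F(\alpha_f)$ for $f$ in $\CC$, and use the adjunction to control the remaining components. But the concrete uniqueness argument you give has a gap. You pin down $\sigma_X(\alpha)_{\id[FX]}$ and then claim that ``the naturality conditions defining an element of $\ZZ_\DD(FX)$ propagate this to all components.'' They do not: naturality only gives $g\circ\beta_{\id[FX]}=\beta_g\circ g$, which determines $\beta_g$ only when $g$ is epic, and in general an element of an isotropy group is \emph{not} determined by its component at the identity --- the comparison $\delta\colon\ZZ(X)\to\Aut(X)$ is typically non-injective, which is the whole point of \emph{extended} inner automorphisms. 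The paper's argument instead notes that the constraints determine $\beta_{Ff}$ for \emph{every} $f\colon A\to B$ in $\CC$ (not just $f=\id$), writes an arbitrary $g\colon FA\to B$ as $g=\epsilon_B\circ F(\bar g)$, and uses naturality of $\beta$ along $\epsilon_B$; the hypothesis that $G$ is faithful enters precisely as ``$\epsilon$ is pointwise epic,'' which is what makes $\beta_g$ uniquely determined by $\beta_{F\bar g}$. Your gloss that faithfulness ``turns existence of a $2$-cell into unique existence of the automorphism implementing it'' misidentifies the mechanism.

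The same missing step affects your existence argument, which you leave as ``the recipe forced above.'' The actual construction is: given $g\colon FA\to B$, the element $\ZZ(\bar g)(\alpha)$ exhibits a $2$-cell $\id[G(B)]\to\alpha_{\bar g}$ in $\CC_\ZZ$, so by hypothesis there is a $2$-cell $\id[FG(B)]\to F(\alpha_{\bar g})$ in $\DD_\ZZ$, \ie $F(\alpha_{\bar g})=\theta_{\id}$ for some $\theta\in\ZZ_\DD(FG(B))$; naturality of $\theta$ at $\epsilon_B$ then produces the unique $\beta_g$ satisfying $\beta_g\circ\epsilon_B=\epsilon_B\circ F(\alpha_{\bar g})$. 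Without this descent along the (epic) counit you have no candidate for the components of $\sigma_A(\alpha)$ at morphisms outside the image of $F$, so the proof is incomplete as written even though the plan is the right one.
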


\begin{proof}
We prove the first claim by showing that for any $\alpha\in \ZZ(A)$, there is at most one $\beta\in \ZZ_\DD F(A)$ such that $\beta_{Ff}=F(\alpha_f)$ for every $f\colon A\to B$ in $\CC$. Indeed, assume $\beta,\beta'\in \ZZ_\DD F(A)$ satisfy this condition, and consider $g\colon F(A)\to B$ in $\DD$. To show that $\beta_g=\beta'_g$, consider the unique $\bar{g}\colon A\to G(B)$ making the triangle
  \[\begin{tikzpicture}
  \node (a) at (0,0) {$F(A)$};
  \node (b) at (3,1) {$FG(B)$};
  \node (c) at (3,-1) {$B$};
  \draw[->] (a) to node[above] {$F(\bar{g})$} (b);
  \draw[->] (a) to node[below] {$g$} (c);
  \draw[->] (b) to node[right] {$\epsilon_B$} (c);
  \end{tikzpicture}\]
  commute. Now, the constraints on $\beta,\beta'$ imply that $\beta_{F\bar{g}}=\beta_{F\bar{g}}'$. As $G$ is faithful, $\epsilon$ being pointwise epic. Thus the square
    \[\begin{tikzpicture}
    \matrix (m) [matrix of math nodes,row sep=2em,column sep=4em,minimum width=2em]
    {
     FG(B) & FG(B)\\
      B & B  \\};
    \path[->]
    (m-1-1) edge node [left] {$\epsilon_B$} (m-2-1)
           edge node [above] {$\beta_{F\bar{g}}=\beta_{F\bar{g}}'$} (m-1-2)
    (m-1-2) edge node [right] {$\epsilon_B$} (m-2-2)
    (m-2-1) edge node [below] {$\beta_g$}  (m-2-2);
  \end{tikzpicture}\]
forces $\beta_g=\beta_g'$, as desired. 

We now move to the next claim: given $\alpha\in \ZZ(A)$ we first show that there exists $\beta\in \ZZ_\DD F(A)$ such that $\beta_{Ff}=F(\alpha_f)$. For $g\colon F(A)\to B$, we define $\beta_g$ as the unique map making the square 

    \[\begin{tikzpicture}
    \matrix (m) [matrix of math nodes,row sep=2em,column sep=4em,minimum width=2em]
    {
     FG(B) & FG(B)\\
      B & B  \\};
    \path[->]
    (m-1-1) edge node [left] {$\epsilon_B$} (m-2-1)
           edge node [above] {$F(\alpha_{\bar{g}})$} (m-1-2)
    (m-1-2) edge node [right] {$\epsilon_B$} (m-2-2)
    (m-2-1) edge node [below] {$\beta_g$}  (m-2-2);
  \end{tikzpicture}\]
 Such an arrow exists since $\ZZ(\bar{g})(\alpha)$ defines a $2$-cell $\id[G(B)]\to \alpha_{\bar{g}}$ and hence by assumption there is a $2$-cell $\id[FG(B)]\to F(\alpha_{\bar{g}})$ so that $F(\alpha_{\bar{g}})$ is part of some isotropy (so that such $\beta_g$ indeed exists). Moreover, $\beta_g$ is uniquely defined by this as $\epsilon_B$ is epic. A straightforward calculation shows that $\beta$ as so defined defines an element of $\ZZ_\DD F(A)$. We now define $\sigma_A(\alpha)=\beta$, whence it follows that $\sigma_A$ defines a homomorphism $\ZZ_\CC A\to \ZZ_\DD F(A)$. To see that these homomorphisms are natural in $A$, consider the square 
     \[\begin{tikzpicture}
    \matrix (m) [matrix of math nodes,row sep=2em,column sep=4em,minimum width=2em]
    {
     \ZZ_\CC(A) & \ZZ_\DD F(A)\\
      \ZZ_\CC (B) &  \ZZ_\DD F(B)  \\};
    \path[->]
    (m-1-1) edge node [left] {$\ZZ_\CC(f)$} (m-2-1)
           edge node [above] {$\sigma_A$} (m-1-2)
    (m-1-2) edge node [right] {$\ZZ_\DD F(f)$} (m-2-2)
    (m-2-1) edge node [below] {$\sigma_B$}  (m-2-2);
  \end{tikzpicture}\]
 and observe that sending $\alpha\in \ZZ_\CC(A)$ along either path results in $\beta\in \ZZ_\DD F(B)$ satisfying 
 $\beta_{Fg}=F(\alpha_{gf})$ for any $g\colon B\to C$, so that by uniqueness of such $\beta$ the square commutes.
\end{proof}

\bibliographystyle{plain}
\bibliography{hnn}

\begin{thebibliography}{10}

\bibitem{adamekrosickyvitale:algebraictheories}
Ji{\v{r}}{\'{\i}} Ad{\'{a}}mek, Ji{\v{r}}{\'{\i}} Rosick{\'{y}}, and E.~M.
  Vitale.
\newblock {\em Algebraic Theories}.
\newblock Cambridge University Press, 2010.

\bibitem{bergman:inner}
George~M. Bergman.
\newblock An inner automorphism is only an inner automorphism, but an inner
  endomorphism can be something strange.
\newblock {\em Publicacions Matem{\`a}tiques}, 56(1):91--126, 2012.

\bibitem{bergman:textbook}
George~M. Bergman.
\newblock {\em An invitation to {G}eneral {A}lgebra and {U}niversal
  {C}onstructions}, volume 558.
\newblock Springer, 2015.

\bibitem{freyd:algebravaluedfunctors}
Peter Freyd.
\newblock Algebra valued functors in general and tensor products in particular.
\newblock In {\em Colloquium Mathematicum}, volume~14, pages 89--106. Institute
  of Mathematics, Polish Academy of Sciences, 1966.

\bibitem{funketal:higherisotropy}
Jonathon Funk, Pieter Hofstra, and Sakif Khan.
\newblock Higher isotropy.
\newblock {\em Theory and Applications of Categories}, 33(20):537--582, 2018.

\bibitem{funketal:crossedtoposes}
Jonathon Funk, Pieter Hofstra, and Benjamin Steinberg.
\newblock Isotropy and {C}rossed {T}oposes.
\newblock {\em Theory and Applications of Categories}, 26(24):660--709, 2012.

\bibitem{hofstrakarvonen:2d-isotropy}
Pieter Hofstra and Martti Karvonen.
\newblock Inner autoequivalences in general and those of monoidal categories in
  particular.
\newblock {\em Journal of Pure and Applied Algebra}, 228(11):107717, 2024.

\bibitem{hofstraetal:isotropyofalgtheories}
Pieter Hofstra, Jason Parker, and Philip Scott.
\newblock Isotropy of {A}lgebraic {T}heories.
\newblock {\em Electronic Notes in Theoretical Computer Science}, 341:201--217,
  2018.

\bibitem{hofstraetal:picard}
Pieter Hofstra, Jason Parker, and Philip~J. Scott.
\newblock Polymorphic {A}utomorphisms and the {P}icard group.
\newblock In {\em 6th International Conference on Formal Structures for
  Computation and Deduction (FSCD 2021)}, volume 195 of {\em Leibniz
  International Proceedings in Informatics (LIPIcs)}, pages 26:1--26:17.
  Schloss Dagstuhl -- Leibniz-Zentrum f{\"u}r Informatik, 2021.

\bibitem{kelly:2-dlimits}
G.~M. Kelly.
\newblock Elementary observations on 2-categorical limits.
\newblock {\em Bulletin of the Australian Mathematical Society},
  39(2):301--317, 1989.

\bibitem{parker:thesis}
Jason Parker.
\newblock {\em Isotropy Groups of Quasi-Equational Theories}.
\newblock PhD thesis, University of Ottawa, 2020.

\bibitem{parker:IsotropyofGrothendieckToposes}
Jason Parker.
\newblock Covariant {I}sotropy of {G}rothendieck {T}oposes and {E}xtensive
  {C}ategories.
\newblock {\em Applied Categorical Structures}, 30:779--803, 2022.

\end{thebibliography}

\end{document}